\documentclass[11pt]{amsart}
\usepackage[parfill]{parskip}    

\usepackage[hidelinks]{hyperref}
\usepackage{url}
\usepackage{amsmath}
\usepackage{graphicx}
\usepackage{amssymb}
\usepackage{mathtools}
\usepackage[T1]{fontenc}
\usepackage{tikz}
\usepackage{enumerate}

\newcommand*{\rom}[1]{\expandafter\@slowromancap\romannumeral #1@}
\newcommand{\RNum}[1]{\uppercase\expandafter{\romannumeral #1\relax}}
\newcommand{\Z}{\mathbb{Z}}  
\newcommand{\R}{\mathbb{R}}  
\newcommand{\C}{\mathbb{C}}  
\newcommand{\Q}{\mathbb{Q}}
\newcommand{\acts}{\curvearrowright}

\newcommand{\Rat}{\mathrm{Rat}}
\newcommand{\Eig}{\mathrm{Eig}}

\newcommand{\ord}{\mathrm{ord}}
\newcommand{\SL}{\mathrm{SL}}
\DeclarePairedDelimiter\ceil{\lceil}{\rceil}
\DeclarePairedDelimiter\floor{\lfloor}{\rfloor}
\newcommand\labs[1]{\left\lvert#1\right\rvert} 

\newcommand\lnorm[1]{\left\lVert#1\right\rVert}

\newcommand{\T}{\mathbb{T}}

\renewcommand{\leq}{\leqslant}
\renewcommand{\geq}{\geqslant}

\theoremstyle{plain} 
\newtheorem{thm}{Theorem}[section]
\newtheorem{lem}[thm]{Lemma}
\newtheorem{pro}[thm]{Proposition}

\newtheorem{thmAlph}{Theorem}[section]

\newtheorem{corAlph}{Corollary}[section]

\theoremstyle{definition} 
\newtheorem{exam}[thm]{Example}

\theoremstyle{remark} 
\newtheorem{rema}[thm]{Remark}

\title[Primitive Averages and Directional Expansivity]{Primitive Averages, Directional Expansivity,\\ and Quantitative Twisted Recurrence for Ergodic $\Z^d$-Actions}
\author{Rickard Cullman, Sean Skinner}

\address{Department of Mathematics, Chalmers and University of Gothenberg, Sweden}
\email{cullman@chalmers.se}
\address{School of Mathematics and Statistics, University of Sydney, Australia}
\curraddr{}
\email{sean.skinner@sydney.edu.au}
\thanks{}

\date{\today}                                           

\begin{document}
\begin{abstract}
We prove two new results about probability preserving actions $T:\Z^d \acts (X,\mu)$. First, for a function $f\in L^2(\mu)$, we provide an explicit formula for the $L^2(\mu)$-limit of the average
\[\frac{1}{\labs{Q_N^\mathcal{P}}}\sum_{v \in Q_N^\mathcal{P}} T_v f\]
where $\mathcal{P}\subset \Z^d$ is the set of primitive vectors, i.e. those for which the greatest common divisor of its components is $1$, and $Q_N^\mathcal{P}= [-N,N]^d\cap \mathcal{P}$.
Second, for a set $A\subset X$ with $\mu(A)>0$, we provide a spectral condition under which the set of $\varepsilon$-expansive directions
\[\left\{ v\in \Z^d \, : \, \mu\left(\bigcup_{n\in \Z} T_{nv}A\right)>1-\varepsilon\right\} \]
has lower density very close to $1$. As an application of our techniques we are also able to prove a quantitative variant of a twisted multiple recurrence theorem of Björklund, Fish and the first author \cite{BjCF}.
\end{abstract}
\maketitle

\section{Introduction}
Fix  an integer $d\geq 2$. Let $T:\Z^d \acts (X,\mu)$ be a probability preserving system\footnote{We choose not to include the underlying $\sigma$-algebra in our notation and moving forward all subsets of a measurable space will be assumed to be measurable. We will also always assume that $L^2(\mu)$ is separable.} and denote $Q_N=[-N,N]^d\cap \Z^d$.
For any $f\in L^2(\mu)$, von Neumann's mean ergodic theorem implies that
\begin{equation}\label{eq: MET}
\lim_{N\to \infty} \frac{1}{\labs{Q_N}}\sum_{v \in Q_N} T_v f = P_{\mathcal{I}}f
\end{equation}
in $L^2(\mu)$ where $P_\mathcal{I}$ denotes the $L^2(\mu)$-projection onto the subspace of $T$-invariant functions
\[
\mathcal{I}= \{f\in L^2(\mu) \, : \, T_v f = f \text{ for all }v\in \Z^d\}.
\]
Our first result gives an explicit analogue of the formula in equation \eqref{eq: MET} when the averages are taken only over the primitive vectors $\mathcal{P}\subset \Z^d$, where a non-zero vector $v\in \Z^d$ is called primitive if the greatest common divisor of its non-zero components is equal to $1$.

Unlike equation \eqref{eq: MET}, the limiting value for the primitive averages also depends on how $f$ correlates with eigenfunctions of finite order, and the strength of this dependence is determined by the arithmetic distribution of primitive lattice points in residue classes. We denote $Q_N^\mathcal{P}:=Q_N\cap \mathcal{P}$.


\begin{thmAlph}\label{Th: MET for primitives}
Let $T:\Z^d \acts (X,\mu)$ be a probability preserving action and let $f\in L^2(\mu)$. Then
\begin{equation}\label{eq: Formula for primitive MET lim}
\lim_{N\to \infty} \frac{1}{\labs{Q_N^\mathcal{P}}} \sum_{v \in Q_N^\mathcal{P}} T_v f 
= P_{\mathcal{I}}f + \sum_{q\geq 2} \frac{\boldsymbol{\mu}(q)}{J_d(q)} P_{q}f,
\end{equation}
where $\boldsymbol{\mu}: \Z_{>0}\to \R$ is the M\"{o}bius function\footnote{We will always use the bold symbol $\boldsymbol{\mu}$ for the M\"{o}bius function, reserving the un-bolded $\mu$ for a probability measure.}, $J_d: \Z_{>0}\to \R$ is the Jordan totient function
\[
J_d(q): = \labs{\left\{ a=(a_1,\ldots,a_d)\in (\Z/q\Z)^d\, : \, \gcd(a,q) = 1\right\}},
\]
and $P_q$ is the orthogonal projection onto the $L^2(\mu)$ subspace generated by eigenfunctions\footnote{For the precise definition of eigenfunctions and their order see Section \ref{Sec: Background}.} of order $q$.
\end{thmAlph}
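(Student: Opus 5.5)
The approach is to use the spectral theorem to reduce the statement to a pointwise limit of exponential sums on the torus, and then to evaluate that limit by M\"obius inversion.

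\textbf{Spectral reduction.} Let $\sigma_f$ be the spectral measure of $f$ on $\T^d$, so that $\langle T_v f, f\rangle = \int e(v\cdot\theta)\,d\sigma_f(\theta)$. Through the unitary isomorphism between the cyclic subspace of $f$ and $L^2(\sigma_f)$, the primitive average corresponds to multiplication by
\[
A_N(\theta)=\frac{1}{\labs{Q_N^\mathcal{P}}}\sum_{v\in Q_N^\mathcal{P}} e(v\cdot\theta).
\]
Since $\labs{A_N}\le 1$, dominated convergence shows it is enough to prove that $A_N\to A$ pointwise, where $A$ is defined as follows:
\begin{itemize}
\item $A(\theta)=\boldsymbol{\mu}(q)/J_d(q)$ if $\theta\in\T^d$ is rational of exact order $q$, meaning $q$ is the least integer with $q\theta\in\Z^d$;
\item $A(\theta)=0$ if $\theta\notin\Q^d/\Z^d$.
\end{itemize}
The limit operator is then multiplication by $A$. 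The atom of $\sigma_f$ at a rational $\theta$ corresponds to the projection of $f$ onto the eigenspace with eigenvalue $e(\cdot\,\theta)$. Summing over all $\theta$ of order $q$ gives $P_qf$, and $q=1$ gives $P_\mathcal{I}f$. The non-rational part of $\sigma_f$ contributes nothing. This yields the formula in \eqref{eq: Formula for primitive MET lim}.

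\textbf{Pointwise limit via M\"obius inversion.} For $v\neq 0$ we have $1_\mathcal{P}(v)=\sum_{k\mid \gcd(v)}\boldsymbol{\mu}(k)$. Hence
\[
\sum_{v\in Q_N^\mathcal{P}} e(v\cdot\theta)=\sum_{k=1}^{N}\boldsymbol{\mu}(k)\,S_{\lfloor N/k\rfloor}(k\theta),\qquad S_M(\phi)=\sum_{w\in Q_M\setminus\{0\}} e(w\cdot\phi).
\]
The same identity with $\theta=0$ gives $\labs{Q_N^\mathcal{P}}\sim (2N)^d/\zeta(d)$. I would fix a cutoff $K$ and split the sum at $k=K$. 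The tail $k>K$ is bounded trivially by $\sum_{k>K}(2N/k+1)^d\ll N^dK^{1-d}$, which after normalization is $O(K^{1-d})$; this is small because $d\ge2$.

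For each of the finitely many $k\le K$ there are two cases:
\begin{itemize}
\item If $k\theta\in\Z^d$, then $S_{\lfloor N/k\rfloor}(k\theta)=(2N/k)^d+O((N/k)^{d-1})$.
\item If $k\theta\notin\Z^d$, then some coordinate $k\theta_i\notin\Z$. The sum factors as a product of one-dimensional Dirichlet kernels (minus the $w=0$ term), and the factor in that coordinate is $o(N/k)$. So $S_{\lfloor N/k\rfloor}(k\theta)=o(N^d)$ for fixed $k$.
\end{itemize}

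\textbf{Evaluating the main term.} If $\theta$ is irrational, the second case holds for every $k$, so $\limsup\labs{A_N}\ll K^{1-d}$, and letting $K\to\infty$ gives $A=0$. If $\theta$ has exact order $q$, then $k\theta\in\Z^d$ exactly when $q\mid k$. Letting $N\to\infty$ and then $K\to\infty$ gives
\[
A(\theta)=\zeta(d)\sum_{q\mid k}\frac{\boldsymbol{\mu}(k)}{k^d}
=\zeta(d)\frac{\boldsymbol{\mu}(q)}{q^d}\prod_{p\nmid q}(1-p^{-d})
=\frac{\boldsymbol{\mu}(q)}{q^d\prod_{p\mid q}(1-p^{-d})}=\frac{\boldsymbol{\mu}(q)}{J_d(q)}.
\]
The middle equality uses that $\boldsymbol{\mu}(qm)=\boldsymbol{\mu}(q)\boldsymbol{\mu}(m)$ when $\gcd(m,q)=1$ and vanishes otherwise.

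\textbf{Main obstacle.} The delicate step is that the $o(N^d)$ estimates are not uniform in $k$: the bound depends on how close $k\theta$ is to $\Z^d$. The truncation at $K$ handles this, but only because the tail bound $K^{1-d}$ is summable, which is exactly where $d\ge2$ is used. A last routine check is that the sum over $q\ge2$ converges in $L^2(\mu)$. The projections $P_q$ are mutually orthogonal and $\labs{\boldsymbol{\mu}(q)/J_d(q)}\le1$, so the series converges.
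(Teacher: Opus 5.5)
Your argument is correct, and its ergodic half is essentially the paper's. The paper splits $L^2(\mu)=\mathcal{K}_{\text{rat}}\oplus\mathcal{K}_{\text{rat}}^\perp$ and shows $\lnorm{A_N h}_2^2=\int\labs{g_N}^2\,d\sigma_h\to 0$ on the second summand by dominated convergence. It then checks finite-order eigenfunctions directly and finishes by a density argument. You get all of this at once from the isomorphism of the cyclic subspace of $f$ with $L^2(\sigma_f)$, at the cost of invoking the standard fact that $\mathbf{1}_{\{\theta\}}$ corresponds to $P_{\Eig_T(\theta)}f$.

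The genuine difference is in computing the pointwise limit of the exponential sums. For rational $\alpha=a/q$, the paper shows that primitive vectors equidistribute among the reduced residue classes modulo $q$: each class gets $\labs{Q_N}/(\zeta(d)J_d(q))$ points up to $O(\log(N)N^{d-1})$. It then evaluates the $d$-dimensional Ramanujan sum $\sum_{\gcd(r,q)=1}e(r\cdot a/q)=\boldsymbol{\mu}(q)$, and uses M\"obius inversion only for irrational $\alpha$. You apply M\"obius inversion to every $\theta$, so the rational value appears as $\zeta(d)\sum_{q\mid k}\boldsymbol{\mu}(k)k^{-d}$ and is read off from an Euler product. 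This needs no congruence counting and no Ramanujan sums. In exchange, the paper's route gives an explicit error term in the rational case and the equidistribution statement itself.

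Your truncation at $K$ also does real work. In the paper's irrational case, the bound $(2M+1)^{d-1}\cdot 2/\labs{1-e(n\alpha_i)}$ is summed over $n\leq N$ with a constant $C_\alpha$ treated as independent of $n$. That is not justified, since $\labs{1-e(n\alpha_i)}$ becomes arbitrarily small along suitable $n$. Splitting at $K$ and bounding the tail trivially by $O(K^{1-d})$, as you do, is what makes that step rigorous.
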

In particular, the limit always exists and depends only on the projection of $f$ to the rational Kronecker factor, i.e. the closed subspace generated by eigenfunctions of finite order. Since $J_d(q)\to \infty$ as $q\to \infty$, the contribution of the order-$q$ eigenfunctions in \eqref{eq: Formula for primitive MET lim} becomes negligible for large $q$.

If the system is totally ergodic, i.e. if $\Z^d$ and all of its finite index subgroups act ergodically, then the formula collapses to the expected constant limit. For $E\subset \Z^d$, we define the lower densities of $E$ with respect to $Q_N$ and $Q_N^\mathcal{P}$ by
\[
\underline{d}_{Q_N}\left(E \right):= \liminf_{N\to \infty}\frac{\labs{E\cap Q_N}}{\labs{Q_N}} 
\qquad\text{and}\qquad 
\underline{d}_{Q_N^\mathcal{P}}\left(E \right):= \liminf_{N\to \infty}\frac{\labs{E\cap Q_N^\mathcal{P}}}{\labs{Q_N^\mathcal{P}}}
\]
respectively.

\begin{corAlph}\label{Cor: Primitive return times}
Let $T:\Z^d \acts (X,\mu)$ be a totally ergodic system and let $f\in L^2(X)$. Then
\[
\lim_{N\to \infty} \frac{1}{\labs{Q_N^\mathcal{P}}} \sum_{v \in Q_N^\mathcal{P}} T_v f = \int_X f \,d\mu.
\]
Moreover, if $A\subset X$ has $\mu(A)>0$, then for every $\varepsilon>0$ we have that
\[
\underline{d}_{Q_N^\mathcal{P}} \left( \left\{v \in \mathcal{P} \, : \, \mu(A\cap T_v A) > \mu(A)^2 - \varepsilon \right\} \right)>0.
\]
\end{corAlph}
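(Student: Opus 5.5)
The plan is to deduce both parts of the corollary from Theorem \ref{Th: MET for primitives}.

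For the first statement, apply Theorem \ref{Th: MET for primitives} to $f$. Total ergodicity gives two facts.

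\emph{The invariant part.} Since $\Z^d$ acts ergodically, $\mathcal{I}$ consists of constants, so $P_{\mathcal{I}}f=\int_X f\,d\mu$.

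\emph{The finite-order part.} We claim $P_q=0$ for every $q\geq 2$. An eigenfunction $g$ of order $q$ satisfies $T_v g=\chi(v)g$ for a character $\chi$ whose values are $q$-th roots of unity. Hence $g$ is invariant under the finite-index subgroup $\ker\chi\supseteq q\Z^d$. By total ergodicity, $q\Z^d$ acts ergodically, so $g$ is constant. A nonzero constant is an eigenfunction only for the trivial character, which has order $1$, not $q\geq 2$. So there are no nonzero eigenfunctions of order $q\geq2$.

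I will check this against the precise definition of order in Section \ref{Sec: Background}; the key point is that order $q\geq 2$ means the character is nontrivial with finite image. Substituting into \eqref{eq: Formula for primitive MET lim} gives the first claim.

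For the second statement, take $f=1_A$ and pair the limit with $1_A$. Since $L^2$ convergence implies weak convergence,
\[
\lim_{N\to\infty}\frac{1}{\labs{Q_N^\mathcal{P}}}\sum_{v\in Q_N^\mathcal{P}}\mu(A\cap T_vA)=\mu(A)^2 .
\]
The order of $T_v$ versus $T_{-v}$ is irrelevant here, since $\mathcal{P}=-\mathcal{P}$ and $\mu$ is invariant.

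Now suppose the lower density of $E=\{v\in\mathcal{P}:\mu(A\cap T_vA)>\mu(A)^2-\varepsilon\}$ with respect to $Q_N^\mathcal{P}$ were $0$. Then along a subsequence $N_k$, the proportion $\delta_k=\labs{E\cap Q_{N_k}^\mathcal{P}}/\labs{Q_{N_k}^\mathcal{P}}$ tends to $0$. Bound the average by using $\mu(A\cap T_vA)\leq\mu(A)$ on $E$ and $\mu(A\cap T_vA)\leq\mu(A)^2-\varepsilon$ off $E$:
\[
\frac{1}{\labs{Q_{N_k}^\mathcal{P}}}\sum_{v\in Q_{N_k}^\mathcal{P}}\mu(A\cap T_vA)\leq \delta_k\mu(A)+(1-\delta_k)(\mu(A)^2-\varepsilon).
\]
The right-hand side tends to $\mu(A)^2-\varepsilon$, which contradicts the limit above. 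Quantitatively, this argument gives $\underline{d}_{Q_N^\mathcal{P}}(E)\geq \varepsilon/(\mu(A)-\mu(A)^2+\varepsilon)>0$.

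The main obstacle is only confirming that total ergodicity kills every $P_q$ with $q\geq2$, which depends on matching the paper's definition of eigenfunction order. The rest is routine.
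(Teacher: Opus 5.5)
Your proposal is correct and follows the paper's proof essentially verbatim: total ergodicity forces every finite-order eigenfunction to be constant, so $P_q=0$ for $q\geq 2$ (you spell out the justification that the paper calls ``easy to see''). Theorem~\ref{Th: MET for primitives} then gives the limit $\int_X f\,d\mu$, pairing with $\mathbf{1}_A$ gives the averaged return limit $\mu(A)^2$, and a contradiction argument yields positive lower density, where your explicit bound $\varepsilon/(\mu(A)-\mu(A)^2+\varepsilon)$ is a valid quantitative version of the paper's last step.
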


Our second result concerns cyclic subgroups of $\Z^d$ along which the orbit of a positive-measure set almost covers the whole space. Given $A\subset X$ with $\mu(A)>0$, we say that a vector $v\in \Z^d$ is an \textit{$\varepsilon$-expansive direction} for $A$ if
\begin{equation}
\mu\left(\bigcup_{n\in \Z}T_{nv} A\right)>1-\varepsilon.
\end{equation}
Expansive directions were first studied by Björklund and Fish \cite{BjF} where they played a central role in their study of simplicies in large subsets of $\Z^d$, and have since found further applications in \cite{FS} and \cite{BjCF}.

The existence of expansive directions depends on how the spectral measure of $A$ is distributed on points of finite order. Recall that the spectral measure $\sigma_A$ of $A$ with respect to $T: \Z^d \acts (X,\mu)$ is the unique finite Borel measure on $\T^d$ satisfying that
\begin{equation}\label{eq: Def of spec measure}
\mu(A\cap T^{v}A)= \int_{\T^d} e(v\cdot\alpha)\, d\sigma_A(\alpha) 
\quad \text{for every }v\in \Z^d.
\end{equation}
The order $\ord(\alpha)$ of a point $\alpha \in \T^d$ is defined to be the smallest positive integer $n$ for which $n\alpha =0$ if such an integer exists, and $\infty$ otherwise. In \cite{BjF} Björklund and Fish proved in that if
\[
\sigma_A(\{\alpha \in \T^d \, : \, 1<\ord(\alpha)<\infty\})<\varepsilon \mu(A)^2,
\]
then the set of $\varepsilon$-expansive directions for $A$ is non-empty\footnote{In fact, they showed that under the same assumption, $\varepsilon$-expansive directions for $A$ exist in any \textit{haystack} $H\subset \Z^d$, where a haystack is an infinite subset $H\subset \Z^d$ such that every distinct $v_1,\ldots, v_d \in H$ are linearly independent. See \cite[Theorem 3.1]{BjF}.}.
Our second theorem strengthens this conclusion from non-emptiness to large lower density under a weaker spectral assumption. Indeed, instead of requiring that $\sigma_A$ gives small mass to the infinite set of all points with order less than $\infty$, we only need to control the mass on the finitely many points of order at most $M$ for some constant $M>0$.

\begin{thmAlph}\label{Th: Size of expansive vectors}
For every $\delta,\varepsilon,\eta>0$, there exist 
\[
M=M(\delta,\varepsilon,\eta)>0
\qquad\text{and}\qquad
\kappa = \kappa(\delta,\varepsilon,\eta)>0
\]
such that the following is true. For any probability preserving action $T:\Z^d \acts (X,\mu)$ and any $A\subset X$ with $\mu(A)\geq \delta$, if
\[\sigma_A(\{\alpha \in \T^d\, : \, 1< \ord(\alpha) \leq M\})<\kappa,\]
then
\[
\underline{d}_{Q_N}\left(\left\{ v\in \Z^d \, : \, \mu\left(\bigcup_{n\in \Z} T_{nv}A\right)>1-\varepsilon\right\}\right)> 1-\eta.
\] 
\end{thmAlph}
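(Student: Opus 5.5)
We reduce the statement to a spectral estimate on one–dimensional averages along each direction $v$, and then count the bad directions with a density (second-moment) argument. Fix $v\in\Z^d\setminus\{0\}$ and set $B_v=\bigcup_{n\in\Z}T_{nv}A$. By the mean ergodic theorem for the single transformation $T_v$,
\[
\frac1K\sum_{k=1}^{K}\mu(A\cap T_{kv}A)\to \int_X \bigl(P_{\mathcal I_v}1_A\bigr)^2\,d\mu = \sigma_A(\{\alpha\in\T^d : v\cdot\alpha=0\}),
\]
where $\mathcal I_v$ denotes the $T_v$-invariant functions. Since $1_{B_v}$ is $T_v$-invariant and $1_A\le 1_{B_v}$, Cauchy--Schwarz gives
\[
\mu(A)^2=\Bigl(\int P_{\mathcal I_v}1_A\cdot 1_{B_v}\Bigr)^2\le \mu(B_v)\,\|P_{\mathcal I_v}1_A\|_2^2 .
\]
Hence
\[
\mu(B_v)\ \ge\ \frac{\mu(A)^2}{\sigma_A(\{\alpha : v\cdot\alpha=0\})}.
\]
It therefore suffices to show $\sigma_A(H_v)<\mu(A)^2/(1-\varepsilon)$ for all $v$ outside a set of upper density at most $\eta$, where $H_v=\{\alpha\in\T^d : v\cdot\alpha=0\}$.

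Decompose $\sigma_A=\mu(A)^2\delta_0+\sigma_{\mathrm{rat}\le M}+\sigma_{\mathrm{rat}>M}+\sigma_{\mathrm{irr}}$, according to whether an atom sits at $0$, at points of order in $(1,M]$, at points of finite order $>M$, or whether the mass lies on points of infinite order (together with the continuous part). The mass at $0$ equals $\|P_{\mathcal I}1_A\|^2\le \mu(A)$. I would first use Theorem~\ref{Th: MET for primitives}'s philosophy more crudely: actually the atom at $0$ is $\|P_{\mathcal I}1_A\|^2$, which may exceed $\mu(A)^2$ when the $\Z^d$-action is non-ergodic. This must be handled by ergodic decomposition. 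Apply the argument on each ergodic component, where the atom at $0$ is exactly $\mu_x(A)^2$, and average; components with small $\mu_x(A)$ are a nuisance, so instead of $\mu(B_v)$ I would estimate $1-\mu(B_v)$ via the component-wise inequality and Markov's inequality. I flag this step as a technical point to check; the key spectral estimate concerns the remaining parts. The contribution of $\sigma_{\mathrm{rat}\le M}$ is $<\kappa$ by hypothesis.

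For the rest, the main step is to average over $v\in Q_N$:
\[
\frac1{|Q_N|}\sum_{v\in Q_N}\nu(H_v)=\int\frac{|\{v\in Q_N: v\cdot\alpha=0\}|}{|Q_N|}\,d\nu(\alpha).
\]
For $\alpha$ of order $q$, the inner density tends to at most $1/q$. This holds because $v\cdot\alpha$ takes values in $\frac1q\Z/\Z$ and the map $v\mapsto v\cdot\alpha$ is a surjective homomorphism onto a cyclic group of order $q$. For $\alpha$ of infinite order, $\{v: v\cdot\alpha=0\}$ is a subgroup of infinite index, so its density tends to $0$. By dominated convergence,
\[
\limsup_N \frac1{|Q_N|}\sum_{v\in Q_N}\bigl(\sigma_{\mathrm{rat}>M}+\sigma_{\mathrm{irr}}\bigr)(H_v)\le \frac{\sigma_A(\T^d)}{M}+o(1)\le\frac1M .
\]
Markov's inequality then shows that the set of $v$ with $(\sigma_{\mathrm{rat}>M}+\sigma_{\mathrm{irr}})(H_v)>\tau$ has upper density at most $1/(M\tau)$.

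Finally, choose $\tau,\kappa$ so that $\mu(A)^2+\kappa+\tau<\mu(A)^2/(1-\varepsilon)$. Since $\mu(A)\ge\delta$, it suffices that $\kappa+\tau<\varepsilon\delta^2$. Then take $M>1/(\eta\tau)$. The set of $v$ with $\sigma_A(H_v)<\mu(A)^2/(1-\varepsilon)$ has lower density $>1-\eta$, and each such $v$ is $\varepsilon$-expansive.

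The main obstacle is the non-ergodic case, where the atom at $0$ exceeds $\mu(A)^2$. I would handle it by working with $P_{\mathcal I}1_A$ in place of $\mu(A)$. The sharper Cauchy--Schwarz, taken conditionally on $\mathcal I$, gives $1-\mu(B_v)\le \int(1-\mathbb E(A|\mathcal I)^2/\text{(local spectral mass)})$. The dependence on $\delta$ then controls the set where $\mathbb E(A\mid\mathcal I)$ is small. I am genuinely unsure whether this goes through, or whether in fact the theorem tacitly needs ergodicity of the full action, for instance if $1_A$ has large invariant part.
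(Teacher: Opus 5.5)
For ergodic actions, or more generally whenever $\sigma_A(\{0\})=\mu(A)^2$, your argument is correct and is essentially the paper's proof. The paper bounds $\mu(\bigcup_{n}T_{nv}A)$ from below by $\sigma_A(\{0\})/\sigma_A(L_v^\perp)$ using Lemma~\ref{Lemma: Michael Lemma}. In the ergodic case this is exactly your Cauchy--Schwarz bound. The paper then computes $\lim_N |Q_N|^{-1}\sum_{v\in Q_N}\sigma_A(L_v^\perp)=\sum_{q\geq 1}q^{-1}\sigma_A(\{\alpha:\ord(\alpha)=q\})$ from the density $1/\ord(\alpha)$ of $\ker\phi_\alpha$. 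Finally it applies Markov to $\sigma_A(L_v^\perp)-\sigma_A(\{0\})$, whose mean is at most $\sigma_A(\Rat(M))+1/M$. You instead bound the $\Rat(M)$ part uniformly in $v$ and apply Markov only to the part of order $>M$; that is equivalent bookkeeping.

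Your suspicion about the non-ergodic case is correct. No repair by ergodic decomposition can work, because the statement as written is false there. Take $X=[0,1]$ with Lebesgue measure, $T_v=\mathrm{id}$ for all $v$, and $A=[0,1/2]$. Then $\sigma_A=\tfrac12\delta_0$, so $\sigma_A(\Rat(M))=0<\kappa$ for every $M$ and $\kappa$. But $\bigcup_n T_{nv}A=A$ for every $v$, so for $\varepsilon<1/2$ there are no $\varepsilon$-expansive directions at all.

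The paper's proof fails at exactly this point. Lemma~\ref{Lemma: Michael Lemma} is quoted for arbitrary actions with numerator $\sigma_A(\{0\})$, but in this example $\sigma_A(\{0\})/\sigma_A(L_v^\perp)=1$ while $\mu(\bigcup_n T_{nv}A)=1/2$. Only your form $\mu(A)^2/\sigma_A(L_v^\perp)$ holds in general; here it gives exactly $1/2$. It agrees with the lemma precisely when $\sigma_A(\{0\})=\mu(A)^2$.

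So the theorem needs an added hypothesis. One option is ergodicity. A more flexible option is $\sigma_A(\{0\}\cup\Rat(M))<\mu(A)^2+\kappa$, under which your proof goes through verbatim: bound $\sigma_A(H_v)$ by $\sigma_A(\{0\}\cup\Rat(M))$ plus the order-$>M$ part. The flexible version is also what the paper's later use requires. In Proposition~\ref{Prop: Density of expansive vectors} the theorem is applied to the sub-action $v\mapsto T_{lv}$, which need not be ergodic even when $T$ is. Its atom at $0$, however, is $\sigma_A(\{\alpha: l\alpha=0\})\leq\mu(A)^2+\sigma_A(\Rat(D))$ when $T$ is ergodic.
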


Not every set in a probability preserving system satisfies the spectral assumption in Theorem \ref{Th: Size of expansive vectors}. However, when $T$ acts ergodically, the ergodic measure increment argument from \cite{FS} allows one to pass to a bounded finite-index subaction and an ergodic component for which the assumption is satisfied, and so we have the following corollary to Theorem \ref{Th: Size of expansive vectors}.

\begin{corAlph}\label{Cor: Expansive vectors}
For every $\delta,\varepsilon,\eta>0$, there exists some positive integer $k_0 = k_0(\delta,\varepsilon,\eta)$ such that the following is true. For every ergodic action $T: \Z^d \acts (X,\mu)$ and every $A\subset X$ with $\mu(A)\geq \delta$, there exists an integer $1\leq k \leq k_0$ and an ergodic component $\nu$ of $\mu$ with respect to the sub-action of $k\Z^d$ such that $\nu(A)\geq \mu(A)$ and
\[
\underline{d}_{Q_N}\left(\left\{ v\in \Z^d \, : \, \nu\left(\bigcup_{n\in \Z} T_{nv}A\right)>1-\varepsilon\right\} \right) > 1-\eta.
\] 
\end{corAlph}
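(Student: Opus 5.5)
We deduce the corollary from Theorem \ref{Th: Size of expansive vectors} by an energy-increment iteration on the ergodic components of finite-index subactions, in the spirit of \cite{FS}. Let $M=M(\delta,\varepsilon,\eta)$ and $\kappa=\kappa(\delta,\varepsilon,\eta)$ be given by Theorem \ref{Th: Size of expansive vectors}. We build integers $1=k_1\mid k_2\mid\cdots$ and measures $\nu_1=\mu,\nu_2,\dots$ such that each $\nu_j$ is an ergodic component of $\mu$ for the subaction of $k_j\Z^d$. We also require $\nu_{j+1}(A)\geq \nu_j(A)+\kappa/2$. Since $\nu_j(A)\leq 1$, the iteration must stop after at most $2/\kappa$ steps. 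Taking $k_{j+1}=k_j\cdot M!$ then gives the bound $k_0=(M!)^{\lceil 2/\kappa\rceil}$.

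First we explain why it suffices to reach a stage $j$ at which the spectral hypothesis holds. Fix $j$ and write $k=k_j$ and $\nu=\nu_j$. Consider the action $S:\Z^d\acts (X,\nu)$ given by $S_w=T_{kw}$, which is measure preserving because $\nu$ is $k\Z^d$-invariant. Suppose the spectral measure $\sigma_A^{S,\nu}$ gives mass $<\kappa$ to $\{1<\ord(\alpha)\leq M\}$. Since $\nu(A)\geq\mu(A)\geq\delta$, Theorem \ref{Th: Size of expansive vectors} applied to $S$ gives
\[
\underline{d}_{Q_N}\Bigl(\bigl\{ v : \nu\bigl(\textstyle\bigcup_n T_{nkv}A\bigr)>1-\varepsilon\bigr\}\Bigr)>1-\eta .
\]
For each $v$ we have $\bigcup_n T_{nkv}A\subset \bigcup_n T_{nv}A$, so the set above is contained in the set appearing in the corollary. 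This gives the conclusion with this $k$ and $\nu$.

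Otherwise the hypothesis fails, and we produce the increment. Recall that $\sigma_A(\{\alpha\})=\lnorm{P_\alpha 1_A}^2$, where $P_\alpha$ is the projection onto the eigenspace of $S$ with eigenvalue $\alpha$. Summing over the finitely many $\alpha$ with $1<\ord(\alpha)\leq M$ gives $\lnorm{\Pi 1_A - \nu(A)}_{L^2(\nu)}^2\geq\kappa$. Here $\Pi$ is the projection onto the span of the eigenfunctions of order dividing $M!$; the constants are the order-$1$ part because $\nu$ is $S$-ergodic. Every such eigenfunction is invariant under $S_{M!w}=T_{kM!w}$. Hence, writing $\mathcal{B}$ for the $\sigma$-algebra of $kM!\Z^d$-invariant sets and $g=\mathbb{E}_\nu(1_A\mid\mathcal{B})$, we get $\operatorname{Var}_\nu(g)\geq\kappa$.

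Because $\nu$ is ergodic for $k\Z^d$ and $kM!\Z^d$ has finite index in $k\Z^d$, the invariant $\sigma$-algebra $\mathcal{B}$ is finite modulo $\nu$. Its atoms $B_i$ give the finitely many ergodic components $\nu(\cdot\mid B_i)$ of $\nu$ for $kM!\Z^d$, and these are also ergodic components of $\mu$ for $kM!\Z^d$. The function $g$ takes the value $\nu(A\mid B_i)$ on $B_i$ and has mean $\nu(A)$, and $0\leq g\leq 1$. Hence
\[
\kappa\leq\operatorname{Var}(g)\leq \mathbb{E}\labs{g-\nu(A)}=2\,\mathbb{E}(g-\nu(A))^+\leq 2\bigl(\max_i \nu(A\mid B_i)-\nu(A)\bigr).
\]
So some component $\nu_{j+1}=\nu(\cdot\mid B_i)$ satisfies $\nu_{j+1}(A)\geq\nu_j(A)+\kappa/2$, with $k_{j+1}=kM!$.

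The main point needing care is this step: identifying the point masses of $\sigma_A^{S,\nu}$ at finite-order points with projections onto eigenfunctions that become invariant under the finite-index subgroup. The other ingredient is the standard fact that a finite-index subaction of an ergodic action has only finitely many ergodic components, each an atom of the invariant $\sigma$-algebra, and that these are ergodic components of $\mu$ itself.
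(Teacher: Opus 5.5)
Your proposal is correct and follows the paper's argument: you iterate Theorem \ref{Th: Size of expansive vectors} over $T^{M!}$-ergodic components with $k_{j+1}=k_j\cdot M!$, and the process stops after boundedly many steps. The differences are minor: you prove the increment step directly with your variance bound, getting an increment of $\kappa/2$ rather than $\kappa/3$ (the paper cites it as \cite[Lemma 3.3]{FS}, $\nu(A)\geq\sqrt{\mu(A)^2+\sigma_A(\Rat(M))}$), and you state explicitly the inclusion $\bigcup_n T_{nkv}A\subset\bigcup_n T_{nv}A$, which the paper uses tacitly.
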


Using Theorems \ref{Th: MET for primitives}\footnote{Actually the proof we present does not use Theorem \ref{Th: MET for primitives} per se, as it turns out to be more direct to use the main technical input to Theorem \ref{Th: MET for primitives} instead, namely Proposition \ref{Prop: Formula for exp sums over primitives}.} and \ref{Th: Size of expansive vectors} we are also able to prove a quantitative variant of a twisted multiple recurrence theorem of Björklund, Fish and the first author. We first recall their theorem.

\begin{thm}[{\cite[Theorem 1.6]{BjCF}}] \label{Theorem: Non-quantitative multiple twisted recurrence}
For any ergodic probability preserving action $T:\Z^d \acts (X,\mu)$ and any $A\subset X$ with $\mu(A)>0$, there exists a positive integer $k=k(A)$ such that for any $d$-linearly independent vectors $v_1, \ldots, v_d\in \Z^d$, there exists some $\gamma \in \SL_d(\Z)$ with
\begin{equation*}
\mu\left( A\cap T_{\gamma k v_1}A\cap \ldots \cap T_{\gamma k v_d}A\right)>0.
\end{equation*}
\end{thm}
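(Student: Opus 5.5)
The plan is to argue by induction on the number of vectors, building the intersection $A\cap T_{\gamma k v_1}A\cap\ldots\cap T_{\gamma kv_j}A$ one vector at a time. At each stage the matrix $\gamma$ will be modified by elements of $\SL_d(\Z)$ that fix the vectors already used. The mechanism that produces the next return is expansivity, as in Björklund–Fish \cite{BjF}. If $w$ is an $\varepsilon$-expansive direction for $A$ and $C$ is a set with $\mu(C)>\varepsilon$, then $C$ meets $\bigcup_n T_{nw}A$ in positive measure. Hence $\mu(C\cap T_{nw}A)>0$ for some $n\in\Z$.

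The integer $k$ comes first. I would use the ergodic measure increment argument of \cite{FS} in the form of Corollary \ref{Cor: Expansive vectors}. It gives $k\leq k_0$ and an ergodic component $\nu$ of the $k\Z^d$-subaction with $\nu(A)\geq\mu(A)$, such that the $\varepsilon$-expansive directions for $A$ under the action $v\mapsto T_{kv}$ with respect to $\nu$ have lower density $>1-\eta$. Positivity of measure with respect to $\nu$ implies positivity with respect to $\mu$, so we may work in $(X,\nu)$. Since $\varepsilon$ must be smaller than the measures of the sets $B_j$ built below, which are not known in advance, I would instead use the qualitative version. For each $\varepsilon>0$ such a component and large density of expansive directions exist. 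The point is that $k$ is chosen independently of $v_1,\ldots,v_d$, using only the spectral data of $A$. I would also double check that one $k$ can serve all the values of $\varepsilon$ that are needed. Passing to a further multiple of $k$ should handle this, because the increment argument stabilizes.

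Next comes the inductive step. Suppose $\gamma$ has been found with $B_j:=A\cap T_{k\gamma v_1}A\cap\ldots\cap T_{k\gamma v_j}A$ of positive measure. Consider transvections $\tau=I+w\phi^{T}$, where $\phi\in\Z^d$ vanishes on $v_1,\ldots,v_j$ and $w\in\Z^d$ satisfies $\phi\cdot w=0$. Each such $\tau$ lies in $\SL_d(\Z)$ and fixes $v_1,\ldots,v_j$. Replacing $\gamma$ by $\gamma\tau^{n}$ changes $k\gamma v_{j+1}$ into $k\gamma v_{j+1}+nc\,k\gamma w$ with $c=\phi\cdot v_{j+1}\neq0$, and leaves $B_j$ unchanged. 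Therefore
\[
\mu\bigl(B_j\cap T_{k\gamma\tau^n v_{j+1}}A\bigr)=\mu\bigl(T_{-k\gamma v_{j+1}}B_j\cap T_{n\,c k\gamma w}A\bigr).
\]
This is positive for some $n$ provided $\mu(B_j)>\varepsilon$ and $c\gamma w$ is an $\varepsilon$-expansive direction for the $k\Z^d$-action. The admissible $w$ (for $j\leq d-1$ one may take $w$ in the span of $v_1,\ldots,v_j$ with a suitable $\phi$) form a sublattice, and $\gamma$ maps it to another sublattice. So I need expansive directions to be plentiful inside every such image sublattice, including after multiplication by $c$.

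I expect this to be the main obstacle. Density $>1-\eta$ in $\Z^d$ does not by itself guarantee that a lower-dimensional sublattice contains an expansive direction. I would handle it in one of two ways. The first is to use the extra freedom in $\gamma$: the $\SL_d(\Z)$-images of a fixed primitive vector are all primitive vectors, which have positive density in $\Z^d$ by the primitive counting behind Theorem \ref{Th: MET for primitives}. This lets $\gamma w$ be chosen among a positive-density set, which then meets the expansive set. The second, if that fails, is to apply the haystack version of \cite[Theorem 3.1]{BjF} inside the sublattice. In either case the quantitative loss from the $\varepsilon$'s must be tracked so that $\mu(B_j)>\varepsilon$ is maintained through all $d$ steps.
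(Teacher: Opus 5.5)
Your transvection step is correct in isolation, but the induction built around it has genuine gaps.

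\emph{First gap: nothing keeps $\mu(B_j)$ bounded below.} $\varepsilon$-expansivity of $w$ only gives $\mu\bigl(C\cap\bigcup_n T_{nw}A\bigr)>\mu(C)-\varepsilon$. That mass is spread over infinitely many $n$, so the $n$ you select yields $\mu(B_{j+1})>0$ with no lower bound. The $\varepsilon$ needed at the next step therefore depends on $v_1,\dots,v_d$ and on earlier choices, and cannot be fixed before $k$. Your remedy, one $k$ for all $\varepsilon$ because the increment argument ``stabilizes'', is unfounded. Corollary \ref{Cor: Expansive vectors} gives, for each $\varepsilon$, some $k\leq (M!)^{\lceil 3/\kappa\rceil}$ and a component that both depend on $\varepsilon$, with $M\to\infty$ and $\kappa\to 0$ as $\varepsilon\to 0$. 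A single component satisfying the hypothesis of Theorem \ref{Th: Size of expansive vectors} for every $\varepsilon$ would need $\sigma_A$ to vanish on all nonzero rational points there. This fails for the odometer $\Z^d\acts\widehat{\Z}^d$: each $k\Z^d$-component is again an odometer with purely rational spectrum. So a set whose relative measure in every coset of every $k\widehat{\Z}^d$ lies strictly between $0$ and $1$ always keeps mass $\nu(A)-\nu(A)^2>0$ on nonzero rationals.

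\emph{Second gap: the shift directions are not free.} With $w\in\mathrm{span}(v_1,\dots,v_j)$ (forced at the last step, where $\phi$ is unique up to scale), $\gamma w$ lies in $\mathrm{span}(\gamma v_1,\dots,\gamma v_j)$. That span was fixed by earlier steps, and no modification $\gamma\tau'$ with $\tau'$ fixing $v_1,\dots,v_j$ can move it. So re-choosing $\gamma w$ among primitive vectors is not available. The haystack theorem of \cite{BjF} gives nothing either, since no infinite subset of a rank-$(d-1)$ sublattice is a haystack in $\Z^d$. The base case is also missing: you need $\gamma v_1\in\gcd(v_1)\mathcal{P}$ with $\mu(A\cap T_{k\gamma v_1}A)$ bounded below, not just positive.

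\emph{The missing idea.} The paper only cites this theorem. Its proof of Theorem \ref{Theorem: Small ratM gives conclusion}, which it describes as essentially the argument of \cite{BjCF}, puts all the work into the first image and then handles $v_2,\dots,v_d$ simultaneously rather than in nested fashion. Let $L=\labs{\det(v_1,\dots,v_d)}$ and $\varepsilon=\mu(A)^2/(2(d-1))$.
\begin{enumerate}
\item Choose $u_1=\gamma_0v_1\in\gcd(v_1)\mathcal{P}$ with $\mu(A\cap T_{u_1}A)>\mu(A)^2/2$ (Proposition \ref{Prop: Density of big return times}, via the primitive averages) and with $Lu_1$ $\varepsilon$-expansive.
\item Shear every $u_j=\gamma_0v_j$ along the common direction $Lu_1$ by $S_j=U(I+LE_{1j})U^{-1}\in\SL_d(\Z)$, where $U$ has columns $u_1,\dots,u_d$.
\item Apply a union bound to $A\cap T_{u_1}A$ and the $d-1$ non-nested sets $B_j=\bigcup_nT_{nLu_1+u_j}A$, each of measure $>1-\varepsilon$.
\end{enumerate}
Now $\varepsilon$ depends only on $\mu(A)$, so the increment argument has a single target. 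For unbounded $L$ and $\gcd(v_1)$ one runs it until $\sigma_A(\{1<\ord(\alpha)<\infty\})<\kappa(\mu(A))$ (Remark \ref{Remark: Finite vs entire rat spec}). While this fails, some $\Rat(M)$, with $M$ depending on $A$, carries mass $\geq\kappa/2$, and Lemma \ref{Lemma: Increment} raises $\nu(A)$ by a fixed amount. So the process stops after $O(1/\kappa)$ stages and gives $k=k(A)$. Pushing $\sigma_A$ forward by $\alpha\mapsto m\alpha$ never increases the mass on nonzero rationals, so the two density estimates then hold for every $m$ and $L$.
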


Theorem \ref{Theorem: Non-quantitative multiple twisted recurrence} is non-quantitative in the sense that the integer $k$ may depend on the set $A$, rather than only on $\mu(A)$. One might hope to strengthen it by ensuring that $k\leq k_0$ for some $k_0$ depending only on $\mu(A)$, however it turns out that this is not possible as the following example shows.

\begin{exam}\label{Example: Counter example}
For every positive integer $k_0$, there exists an ergodic system $T: \Z^d \acts (X,\mu)$ and a set $A\subset X$ with $\mu(A)=2^{-d}$ such that for every $k=1,\ldots,k_0$, there exists a vector $v_k\in \Z^d$ for which
\[
\mu(A\cap T_{\gamma kv_k}A)=0 
\quad \text{for every }\gamma \in \SL_{d}(\Z).
\]
\end{exam}

In light of Example \ref{Example: Counter example}, any quantitative variant of Theorem \ref{Theorem: Non-quantitative multiple twisted recurrence} must require some further restriction on the configurations $v_1,\ldots,v_d$ for which the conclusion is satisfied. Indeed, restricting ourselves to only those configurations for which $\labs{\det(v_1,\ldots,v_d)}$ is bounded, we prove the following quantitative variant of Theorem \ref{Theorem: Non-quantitative multiple twisted recurrence}.

\begin{thmAlph}\label{Theorem: Quantitative multiple twisted recurrence}
For any $D\in \Z_{>0}$ and any $\delta>0$, there exists a positive integer $k_0=k_0(D,\delta)$ such that for every ergodic action $T:\Z^d \acts (X,\mu)$ and every $A\subset X$ with $\mu(A)\geq \delta$, there exists an integer $1\leq k\leq k_0$ such that the following holds. For any $v_1,\ldots,v_d \in \Z^d$ with 
\[
0< \labs{\det(v_1,\ldots,v_d)}\leq D,
\]
there exists some $\gamma \in \SL_d(\Z)$ with
\[
\mu\left( A\cap T_{\gamma k v_1}A\cap \ldots \cap T_{\gamma k v_d}A\right)>0.
\]
\end{thmAlph}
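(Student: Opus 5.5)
We would deduce Theorem \ref{Theorem: Quantitative multiple twisted recurrence} from Corollary \ref{Cor: Expansive vectors} together with a counting argument over primitive vectors. First apply Corollary \ref{Cor: Expansive vectors} with $\delta$, a small $\varepsilon$ (say $\varepsilon<\delta/(d+1)$) and a small $\eta$ to be chosen in terms of $D$ and $d$. This gives $k\leq k_0'$ and an ergodic component $\nu$ of $\mu$ for the $k\Z^d$ sub-action, with $\nu(A)\geq\delta$, such that the set $E$ of $\varepsilon$-expansive directions for $A$ with respect to $\nu$ has $\underline{d}_{Q_N}(E)>1-\eta$. Since $\nu\ll\mu$ on the relevant invariant set, positivity of $\nu(A\cap T_{\gamma k' v_1}A\cap\dots)$ implies positivity for $\mu$. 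We will take the final integer to be $k'=kL$ for a suitable $L\le L(D)$, e.g.\ $L=D!$, so that $k'\le k_0:=k_0'\cdot D!$.

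The key structural step is as follows. Let $\Lambda=\langle v_1,\dots,v_d\rangle$, a lattice of index at most $D$. By Smith normal form there is $\gamma_0\in\SL_d(\Z)$ such that $\gamma_0\Lambda$ contains $D!\,\Z^d$. Hence the configuration $\gamma_0 L v_j$ lies in $L\Z^d$ with bounded ``shape''. The idea is to choose $\gamma$ so that each $\gamma k' v_j$ is a multiple of an expansive direction. We then get $\nu\bigl(\bigcup_n T_{n w_j}A\bigr)>1-\varepsilon$ for the corresponding directions. If the multiples line up, ergodicity of the cyclic subaction and the union bound give a common point with $\nu(A\cap\bigcap_j T_{n_jw_j}A)>0$, since $\nu(A)>d\varepsilon$. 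The difficulty is that we need the \emph{specific} shifts $\gamma k'v_j$, not arbitrary multiples $n_jw_j$. Following \cite{BjCF}, we would handle this by recurrence. For an $\varepsilon$-expansive $w$ relative to the $k\Z^d$-ergodic measure, the sets $T_{nw}A$ cover most of the space. Using Theorem \ref{Th: Size of expansive vectors}'s spectral condition (no mass at low-order rational points for $\nu$), $\frac1N\sum_n \nu(B\cap T_{nw}A)\to\nu(B)\nu(A)$ for $w$ outside a small density set. Recurrence therefore holds for a positive proportion of $n$, and in particular, after passing to $L\Z$ sub-progressions, for $n$ in the correct residue class.

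Concretely, we would run an induction on $j=1,\dots,d$. Set $B_0=A$ and $B_j=B_{j-1}\cap T_{\gamma k'v_j}A$, choosing $\gamma$ column by column. The set of $\gamma\in\SL_d(\Z)$ with $\gamma\gamma_0^{-1}$ fixing a flag is rich enough that the images $\gamma v_j$ range over all primitive vectors in suitable cosets modulo the sublattice. Here Proposition \ref{Prop: Formula for exp sums over primitives} (the equidistribution of primitive vectors in residue classes, with densities governed by $J_d$) ensures that the density-$(1-\eta)$ set $E$ meets each such coset of primitive vectors. This requires $\eta$ small compared with $1/(D^d J_d(D!))$, which fixes $\eta$ and hence $k_0$ in terms of $D,\delta$ only.

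The main obstacle is this simultaneous choice. We need a single $\gamma$ making all $d$ images good, while maintaining positivity of the nested intersections $B_j$ whose measures shrink at each step. We would control this by requiring at each stage $\nu(B_j)\geq c_j(\delta,D)>0$ via the averaged correlation estimate above, and by choosing $\varepsilon,\eta$ depending on the final $c_d$ in a downward recursion.
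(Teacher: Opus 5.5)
\textbf{There is a genuine gap.} It sits exactly at the step you call ``the main obstacle.''

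\emph{First missing ingredient: recurrence along a fixed $\SL_d(\Z)$-orbit.} Since $\SL_d(\Z)$ preserves the gcd of a vector, $\gamma k'v_1$ can only range over $k'\gcd(v_1)\mathcal{P}$. So you need a vector $u$ in this set with $\nu(A\cap T_uA)$ bounded below. In any scheme of this type the remaining vectors are then moved along a direction determined by $u$, so $u$ must be good in both senses at once.

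Your averaging of $\nu(B\cap T_{nw}A)$ over $n$ does not provide this. It only gives some $n$ with a good return along $nw$, and $\gcd(nw)=n\gcd(w)$ is then uncontrolled; restricting $n$ to a residue class does not fix this. That averaged limit is also only approximate, with error of size $\sigma_A(L_w^\perp\setminus\{0\})^{1/2}$ rather than zero.

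Example~\ref{Example: Counter example} shows that recurrence along $m\mathcal{P}$ can fail outright. So this step needs the spectral hypothesis $\sigma_A(\Rat(M))<\kappa$ for the final measure, together with the primitive exponential-sum formula of Proposition~\ref{Prop: Formula for exp sums over primitives}. Corollary~\ref{Cor: Expansive vectors} as stated does not give you that hypothesis; you must rerun the increment argument, stopping once the spectral condition holds.

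In the reduced setting ($k=1$, $\sigma_A(\Rat(M))<\kappa$), this yields that $\{v:\mu(A\cap T_{mv}A)>\mu(A)^2/2\}$ has lower density $>\mu(A)^2/3$ relative to $Q_N^{\mathcal P}$. That set is then intersected with a set of relative density $>1-\delta^2/3$ of vectors $v$ whose multiples $lv$, for $l$ bounded in terms of $D$, are expansive. You use the primitive equidistribution only to make $E$ meet residue classes, never for return times.

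Note also that lower density of $E$ does not transfer to $\{u: lu\in E\}$. The paper gets expansiveness of multiples by applying Theorem~\ref{Th: Size of expansive vectors} to the sub-actions $v\mapsto T_{lv}$, using $\sigma_A^l(\Rat(M_1))\le\sigma_A(\Rat(DM_1))$.

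\emph{Second missing ingredient: a mechanism for choosing $\gamma$ simultaneously.} Your description of the flag stabilizer (``images range over all primitive vectors in suitable cosets'') is not accurate. The $v_j$ need not be primitive, and their images are tied together by the determinant. What works is an explicit shear:
\begin{itemize}
\item Fix $\gamma_0$ with $\gamma_0v_1=u_1$, the good vector. Put $u_i=\gamma_0v_i$, $U=(u_1|\cdots|u_d)$ and $L:=\det U$, so $0<\labs{L}\le D$.
\item Then $S_j=U(I+LE_{1j})U^{-1}=I+UE_{1j}\mathrm{adj}(U)$ lies in $\SL_d(\Z)$. The $S_j$ commute and fix $u_1$, and $S_j$ sends $u_j\mapsto u_j+Lu_1$ while fixing the other $u_i$.
\item Hence every configuration $(u_1,u_2+n_2Lu_1,\dots,u_d+n_dLu_1)$ equals $\gamma(v_1,\dots,v_d)$ for the single element $\gamma=S_2^{n_2}\cdots S_d^{n_d}\gamma_0$.
\end{itemize}
All remaining constraints therefore concern the one direction $Lu_1$. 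A union bound with $\mu(A\cap T_{u_1}A)>\delta^2/2$ and $\mu(T_{u_j}\bigcup_nT_{nLu_1}A)>1-\delta^2/(2(d-1))$ finishes the proof. No $D!$ factor, Smith normal form or cosets are needed.

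Once the shear and a good $u_1$ are available, your correlation-based induction on $\nu(B_j)$ would also go through. Without these two ingredients, however, your scheme can neither start nor realize the prescribed shifts $\gamma k'v_j$.
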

Unlike the proof of Theorem \ref{Theorem: Non-quantitative multiple twisted recurrence} in \cite{BjCF}, our proof of Theorem \ref{Theorem: Quantitative multiple twisted recurrence} does not use random walk theory, and in particular does not rely on the deep equidistribution results of \cite{BFLM}. Our methods could also be used to give a new proof Theorem \ref{Theorem: Non-quantitative multiple twisted recurrence} which would remove the reliance on random walks entirely, but we have decided not to pursue that direction here, leaving the details to the motivated reader.

By a routine application of Furstenberg's correspondence principle, Theorem \ref{Theorem: Quantitative multiple twisted recurrence} has the following combinatorial consequence. Recall that the upper Banach density of $E\subset \Z^d$ is
\[
d^*(E):=\limsup_{N\to \infty} \sup_{v \in \Z^d} 
\frac{\labs{E\cap (v+ [0,N-1]^d)}}{N^d}.
\]

\begin{corAlph}
For any $D\in \Z_{>0}$ and any $\delta>0$, there exists a positive integer $k_0=k_0(D,\delta)$ such that for every $E\subset \Z^d$ with $d^*(E)\geq \delta$, there exists an integer $1\leq k\leq k_0$ such that for any $v_1,\ldots,v_d \in \Z^d$ with
\[
0< \labs{\det(v_1,\ldots,v_d)}\leq D,
\]
there exists some $v_0\in E$ and $\gamma \in \SL_d(\Z)$ with
\[
v_0 + k\gamma v_i \in E 
\qquad \text{for all }i=1,\ldots,d.
\]
\end{corAlph}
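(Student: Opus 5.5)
We will derive the corollary from Theorem \ref{Theorem: Quantitative multiple twisted recurrence} via the Furstenberg correspondence principle. The one subtlety is that the standard correspondence produces a system which need not be ergodic, while the theorem requires ergodicity. We will therefore pass to an ergodic component, and we must make sure the density bound $\delta$ survives this passage.

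\emph{Step 1: correspondence.} Given $E\subset\Z^d$ with $d^*(E)\geq\delta$, the Furstenberg correspondence principle for $\Z^d$ supplies a probability preserving action $T:\Z^d\acts(X,\mu)$ and a set $A\subset X$ with $\mu(A)=d^*(E)\geq\delta$. Moreover, for all $w_1,\ldots,w_m\in\Z^d$,
\[
d^*\bigl(E\cap(E-w_1)\cap\cdots\cap(E-w_m)\bigr)\geq \mu\bigl(A\cap T_{w_1}A\cap\cdots\cap T_{w_m}A\bigr).
\]
Here $X=\{0,1\}^{\Z^d}$ with the shift, $A$ is the cylinder set $\{x: x_0=1\}$, and $\mu$ is a weak-$*$ limit of the empirical measures along boxes that realise the upper Banach density.

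\emph{Step 2: ergodic component.} Decompose $\mu=\int\mu_\omega\,d\omega$ into ergodic components. Since $\int\mu_\omega(A)\,d\omega=\mu(A)\geq\delta$, some ergodic component $\nu=\mu_\omega$ satisfies $\nu(A)\geq\delta$. In fact a positive-measure set of such components exists, but one suffices. Let $k_0=k_0(D,\delta)$ be the constant from Theorem \ref{Theorem: Quantitative multiple twisted recurrence}. Applying the theorem to $(X,\nu)$ and $A$ gives an integer $1\leq k\leq k_0$ with the following property: for every $v_1,\ldots,v_d$ with $0<\labs{\det(v_1,\ldots,v_d)}\leq D$ there is some $\gamma\in\SL_d(\Z)$ with
\[
\nu\bigl(A\cap T_{\gamma kv_1}A\cap\cdots\cap T_{\gamma kv_d}A\bigr)>0.
\]

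\emph{Step 3: transfer back to $E$.} We need positivity with respect to $\mu$, and this follows because $\nu$ is an ergodic component of $\mu$. The cleanest route is to build the correspondence measure so that it is supported on the orbit closure of $1_E$. The intersection $A\cap T_{\gamma k v_1}A\cap\cdots$ is a cylinder set, hence open in $X$. A $\nu$-generic point lies in the support of $\mu$, and since this open set has positive $\nu$-measure it meets $\mathrm{supp}(\mu)$. It then suffices to show that the open set contains some shift of $1_E$. This holds because every point of the orbit closure of $1_E$ is a limit of shifts $T_u1_E$, and an open cylinder containing such a point contains some $T_u 1_E$.

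Unwinding the cylinder condition, $T_u1_E$ lying in the open set produces $v_0=u'\in E$ with $v_0+k\gamma v_i\in E$ for all $i$, where the sign convention is fixed by the shift. With this topological argument the density inequality from the correspondence is not even needed; we only need $\mathrm{supp}(\nu)\subset\overline{\{T_u1_E\}}$.

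\emph{Main obstacle.} The only point requiring care is that $k$ is chosen for the fixed set $A$ in the fixed system $(X,\nu)$ and is then uniform over all configurations $v_1,\ldots,v_d$. That is exactly the form of quantifiers the corollary demands, with $k\leq k_0(D,\delta)$ depending only on $D$ and $\delta$.
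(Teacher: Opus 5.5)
Your proposal is correct. It is the ``routine application of Furstenberg's correspondence principle'' that the paper invokes without further detail, and you handle the ergodicity hypothesis of Theorem~\ref{Theorem: Quantitative multiple twisted recurrence} properly by passing to an ergodic component and exploiting that the configuration set is a clopen cylinder. One small correction: the positive-measure set of components with $\nu(A)\geq\delta$ is not optional, because you must intersect it with the full-measure set of components satisfying $\nu(\overline{\{T_u\mathbf{1}_E\}})=1$ for your claim that ``a $\nu$-generic point lies in $\mathrm{supp}(\mu)$'' to hold; any sign issue in the shift convention is removed by applying the theorem to $-v_1,\ldots,-v_d$, which has the same $\labs{\det}$.
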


\medskip
\noindent\textit{Asymptotic notation.} 
All asymptotic notation is taken as $N\to\infty$. We write $o_N(1)$ for any quantity which tends to $0$ as $N\to\infty$, and write $a_N=O(b_N)$ if there is a constant $C>0$, independent of $N$, such that $|a_N|\leq C|b_N|$ for all sufficiently large $N$.

\medskip
\noindent\textit{Acknowledgments.}
We are grateful to Michael Björklund and Alexander Fish for their guidance and encouragement. S.S. is particularly thankful to Michael Björklund and Chalmers University for their hospitality in June 2025, when this work began. S.S. was supported by the Australian Research Council through grant DP240100472.

\section{Background}\label{Sec: Background}
We begin by recalling some relevant background material. Let $d$ be a positive integer and let $T: \Z^d \acts (X,\mu)$ be a probability preserving action.

Given any $f\subset L^2(\mu)$ the spectral measure $\sigma_f$ of $f$ with respect to $T:\Z^d \acts (X,\mu)$ is the unique finite Borel measure on $\T^d$ with 
\begin{equation}\label{eq: Def of spec measure}
\int_X f \cdot T_v \overline{f}\, d\mu= \int_{\T^d} e(v\cdot\alpha)\, d\sigma_f(\alpha) \quad \text{for every }v\in \Z^d,
\end{equation}
where $e(x):=\exp(2\pi i x)$ and $\cdot$ is the standard dot product. If $A\subset X$ then we write $\sigma_A$ for $\sigma_{\mathbf{1}_A}$. We always have that $\sigma_A(\{0\})\geq \mu(A)^2$, with equality in the case that $T$ acts ergodically\footnote{See for instance \cite[Lemma 2.5]{BjF}.}.

The order $\ord(\alpha)$ of a point $\alpha \in \T^d$ is defined to be the smallest positive integer $n$ such that $n\alpha = 0$ in $\T^d$ if such an integer exists, and $\infty$ otherwise. For $M>0$ we denote
\[
\mathrm{Rat}(M): = \{\alpha \in \T^d\, : \, 1< \ord(\alpha) \leq M\}.
\]

For any $\alpha \in \T^d$, a non-zero function $f\in L^2(\mu)$ is called an \textit{$\alpha$-eigenfunction} or an \textit{eigenfunciton with eigenvalue $\alpha$} if
\[ T_v f = e(v\cdot \alpha) f \qquad\text{for all }v\in \Z^d\]
and we denote the closed $L^2(\mu)$ subspace spanned by all $\alpha$-eigenfunctions by $\Eig_T(\alpha)$. The order of an $\alpha$-eigenfunction is defined to be $\ord(\alpha)$ and for each positive integer $q$ we write
\[ \mathcal{K}_q := \bigoplus_{\substack{\alpha \in \T^d \\ \ord(\alpha)=q}} \Eig_T(\alpha)\]
for the subspace generated by eigenfunctions of order $q$. The \textit{rational Kronecker factor} is then defined to be
\[ \mathcal{K}_\text{rat}: = \overline{\bigoplus_{q\geq 1} \mathcal{K}_q}.\]

We will also make use of the fact that $\mathcal{P}$ has positive density in $\Z^d$ with respect to $Q_N$, a proof of which can be found in \cite{N}. More precisely,
\begin{equation}\label{eq: Primitive density exists}
\frac{\labs{Q_N^\mathcal{P}}}{\labs{Q_N}} = \frac{1}{\zeta(d)} + o_N(1)
\end{equation}
where $\zeta$ is the Riemann zeta function. For more details, see Section \ref{Section: Exp sums}, where in particular we also provide a proof of equation \eqref{eq: Primitive density exists}.
\section{The case of $d=1$ in Theorem \ref{Theorem: Quantitative multiple twisted recurrence}}
As the notions of primitive vectors and expansive directions are slightly degenerate in dimension $1$, we have only stated our results in the introduction for $d\geq 2$. We remark however that the analogous version of Theorem \ref{Theorem: Quantitative multiple twisted recurrence} for $d=1$ is still true, and in fact follows easily from Poincar\'{e} recurrence. In particular no ergodicity assumption is required. Indeed, the dimension $d=1$ analogue of Theorem \ref{Theorem: Quantitative multiple twisted recurrence} states that for any positive integer $D$ and any $\delta>0$, there exists a positive integer $k_0 = k_0(D,\delta)$ such that for any probability preserving action $T: \Z \acts (X,\mu)$ and any $A\subset X$ with $\mu(A)\geq \delta$, there exists some $1\leq k\leq k_0$ such that
\[\mu(A\cap T_{km}A)>0 \quad \text{for all } m=1,\ldots,D.\]
Applying Poincar\'{e} recurrence to the $\Z$ action of $T\times T^2 \times \ldots \times T^D$ on $(X^D,\mu^{\otimes^D})$ and the set $B = A\times\ldots \times A\subset X^D$ yields the desired result as the first non-trivial return time can always be bounded in terms of $(\mu^{\otimes^D}(B))^{-1}$, which of course is at most $\delta^{-D}$.

For the remainder of the paper let us fix a dimension $d\geq 2$, with the understanding that all constants, explicit and implied, will in general depend on the dimension $d$.
\section{A mean ergodic theorem for primitive vectors}
In this section we prove Theorem \ref{Th: MET for primitives} using the following exponential-sum formula as a black box, and we use it to deduce Corollary \ref{Cor: Primitive return times}.
\begin{pro}\label{Prop: Formula for exp sums over primitives}
For any $\alpha \in \T^d$ we have that
\[g(\alpha): = \lim_{N\to \infty}\frac{1}{\labs{Q_N^\mathcal{P}}} \sum_{v\in Q_N^{\mathcal{P}}} e(v\cdot \alpha)=
\begin{cases}
1 & \text{ if } \alpha=0\\
\frac{1}{J_d(q)}\boldsymbol{\mu}(q) & \text{ if } \ord(\alpha)=q\geq 2\\
0 & \text{ if }\ord(\alpha)=\infty\\
\end{cases}\]
where $\boldsymbol{\mu}: \Z_{>0}\to \R$ is the M\"{o}bius function and $J_d: \Z_{>0}\to \R$ is the Jordan totient function
\[
J_d(q) = \labs{\left\{ a=(a_1,\ldots,a_d)\in (\Z/q\Z)^d\, : \, \gcd(a,q) = 1\right\}}.
\]
\end{pro}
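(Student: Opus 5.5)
Our plan is to use Möbius inversion to remove the primitivity condition. Since every nonzero $v\in\Z^d$ can be written uniquely as $v=m w$ with $m\geq 1$ and $w\in\mathcal{P}$, and since $\sum_{m\mid n}\boldsymbol{\mu}(m)=\mathbf{1}_{n=1}$, we have for $v\neq 0$ that $\mathbf{1}_{\mathcal{P}}(v)=\sum_{m\mid \gcd(v)}\boldsymbol{\mu}(m)$. Hence
\[
\sum_{v\in Q_N^{\mathcal{P}}} e(v\cdot\alpha)=\sum_{m=1}^{N}\boldsymbol{\mu}(m)\sum_{\substack{u\in Q_{\floor{N/m}}\\ u\neq 0}} e(u\cdot m\alpha).
\]
Taking $\alpha=0$ and using $\sum_m \boldsymbol{\mu}(m)(2N/m)^d\cdot(1+O(m/N))$ gives $\labs{Q_N^\mathcal{P}}=(2N)^d/\zeta(d)+O(N^{d-1}\log N)$, i.e. equation \eqref{eq: Primitive density exists}. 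For the latter we use $d\geq 2$, so the error $\sum_{m\leq N}(N/m)^{d-1}$ is $O(N^{d-1}\log N)$, and the tail $\sum_{m>N}m^{-d}$ is negligible. This fixes the normalisation.

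For general $\alpha$, the inner sum is a box sum $S_{K}(\beta):=\sum_{u\in Q_K}e(u\cdot\beta)$ with $K=\floor{N/m}$ and $\beta=m\alpha$; we have $S_K(\beta)/\labs{Q_K}\to \mathbf{1}_{\beta=0}$ and $\labs{S_K(\beta)}\leq \labs{Q_K}$. Divide by $\labs{Q_N^\mathcal{P}}\asymp N^d$. The $m$-th term is bounded by $C\,m^{-d}$, which is summable since $d\geq 2$, so dominated convergence over $m$ (with $N\to\infty$ for each fixed $m$) gives
\[
g(\alpha)=\zeta(d)\sum_{m\geq 1}\boldsymbol{\mu}(m)\,m^{-d}\,\mathbf{1}_{m\alpha=0}.
\]
The $u=0$ terms contribute $O(N)/N^d\to0$. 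This is the key analytic step. The only care needed is uniformity: for fixed $m$ the limit is pointwise in $K$, and the domination by $m^{-d}$ is uniform in $N$, so this step should be clean.

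It remains to evaluate the series. If $\ord(\alpha)=\infty$, no $m$ survives and $g(\alpha)=0$. If $\alpha=0$, we get $\zeta(d)\cdot\zeta(d)^{-1}=1$. If $\ord(\alpha)=q$, then $m\alpha=0$ iff $q\mid m$. Writing $m=qn$, the multiplicativity of $\boldsymbol{\mu}$ (with $\boldsymbol{\mu}(qn)=\boldsymbol{\mu}(q)\boldsymbol{\mu}(n)$ when $\gcd(n,q)=1$, and $0$ otherwise) gives
\[
g(\alpha)=\zeta(d)\,\boldsymbol{\mu}(q)q^{-d}\sum_{\gcd(n,q)=1}\boldsymbol{\mu}(n)n^{-d}=\boldsymbol{\mu}(q)q^{-d}\prod_{p\mid q}(1-p^{-d})^{-1}.
\]
Finally we identify this with $\boldsymbol{\mu}(q)/J_d(q)$ via the standard product formula $J_d(q)=q^d\prod_{p\mid q}(1-p^{-d})$. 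That formula follows from counting $a\in(\Z/q\Z)^d$ with $\gcd(a,q)=1$ by inclusion–exclusion over the primes dividing $q$, or equivalently by CRT multiplicativity with $J_d(p^k)=p^{kd}-p^{(k-1)d}$. The main obstacle is only bookkeeping in the dominated-convergence interchange; the arithmetic is routine.
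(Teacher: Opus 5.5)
Your proof is correct, but for rational $\alpha$ it takes a different route from the paper's. The paper handles $\ord(\alpha)=q<\infty$ by splitting $Q_N^{\mathcal{P}}$ into residue classes modulo $q$. It first shows, by M\"obius inversion and the Chinese remainder theorem, that primitive vectors are equidistributed among the $J_d(q)$ coprime classes: each class receives $\labs{Q_N}/(\zeta(d)J_d(q))$ points up to an explicit error. It then evaluates the $d$-dimensional Ramanujan sum $\sum_{r\in[q]^d,\ \gcd(r,q)=1}e(r\cdot a/q)=\boldsymbol{\mu}(q)$. Only in the case $\ord(\alpha)=\infty$ does it use the decomposition $\sum_{n\leq N}\boldsymbol{\mu}(n)\sum_{v\in Q_{\floor{N/n}}}e(v\cdot n\alpha)$.

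You push that one decomposition through for every $\alpha$ and arrive at the single formula $g(\alpha)=\zeta(d)\sum_{m\geq 1,\ m\alpha=0}\boldsymbol{\mu}(m)m^{-d}$. After that, the three cases reduce to a short Euler-product computation.

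What each route buys:
\begin{itemize}
\item The paper's version carries explicit error terms through the counting step. It also isolates where the two factors come from: $1/J_d(q)$ from equidistribution in coprime classes, and $\boldsymbol{\mu}(q)$ from the Ramanujan sum.
\item Yours is qualitative but shorter and uniform in $\alpha$, and a qualitative limit is all the later sections use.
\end{itemize}

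Your interchange of limit and sum is also on firmer ground than the paper's irrational case. There, each inner sum is bounded by $(2M+1)^{d-1}\cdot 2/\labs{1-e(n\alpha_i)}$, and this factor is then treated as a constant $C_\alpha$ independent of $n$. It is not, since $n\alpha_i$ comes arbitrarily close to an integer. Your summable majorant $Cm^{-d}$ (using $d\geq 2$), together with termwise convergence for each fixed $m$, is exactly what makes that step rigorous.
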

The proof of Proposition \ref{Prop: Formula for exp sums over primitives} is purely number-theoretic and is postponed to Section \ref{Section: Exp sums}, so that the ergodic-theoretic details can be presented without interruption.
\begin{proof}[Proof of Theorem \ref{Th: MET for primitives} via Proposition \ref{Prop: Formula for exp sums over primitives}]
Fix some probability preserving action $T:\Z^d \acts (X,\mu)$ and for any positive integers $N$ and $q$ denote
\[ A_N:= \frac{1}{\labs{Q_N^\mathcal{P}}} \sum_{v\in Q_N^\mathcal{P}} T_v \qquad \text{and} \qquad c_q:=\frac{\boldsymbol{\mu}(q)}{J_d(q)}.\]
Denote by $L: L^2(\mu) \to L^2(\mu)$ the proposed limit operator
\[ Lf  := \sum_{q=1}^\infty c_q P_q f \qquad \text{for any } f\in L^2(\mu)\]
where $P_q$ is orthogonal projection onto $\mathcal{K}_q$.
It is easy to see that $L$ is a linear operator, and in fact it is an $L^2(\mu)$ contraction since by pairwise orthogonality of each $\mathcal{K}_q$ and the fact that each $\labs{c_q}\leq 1$, for any $f\in L^2(\mu)$ we can estimate
\[ \lnorm{Lf}_2^2 = \sum_{q=1}^\infty \lnorm{c_q P_q f}_2^2 \leq \sum_{q=1}^\infty \lnorm{P_q f}_2^2 \leq \lnorm{f}_2^2.\]
We must show that
\begin{equation}\label{eq: AN f goes to Lf}
\lim_{N\to \infty} A_{N}f = L f \qquad \text{for any } f\in L^2(\mu).
\end{equation}
First consider the case that $h\in L^2(\mu)$ is orthogonal to $\mathcal{K}_\text{rat}$. Then for each $\alpha \in \T^d$, equation \eqref{eq: Def of spec measure} together with the mean ergodic theorem applied to the unitary action of $\overline{e(v\cdot \alpha)}T_v$ implies\footnote{See \cite[Lemma 5.2]{FS}.} that  $\lnorm{P_{\Eig_T(\alpha)}h}_2^2 = \sigma_h(\{\alpha\})$ where $P_{\Eig_T(\alpha)}$ is the orthogonal projection onto $\Eig_T(\alpha)$. As $h\perp \mathcal{K}_\text{rat}$ then we must have that $\sigma_h(\Q^d/\Z^d)=0$. We can then calculate
\begin{align*}
\lnorm{A_N h}_2^2 = \lnorm{\frac{1}{\labs{Q_N^\mathcal{P}}} \sum_{v\in Q_N^{\mathcal{P}}} T_v h}_2^2 &= \left \langle \frac{1}{\labs{Q_N^\mathcal{P}}} \sum_{v\in Q_N^{\mathcal{P}}} T_v h ,\frac{1}{\labs{Q_N^\mathcal{P}}} \sum_{w\in Q_N^{\mathcal{P}}} T_w h \right\rangle\\
&=\frac{1}{\labs{Q_N^\mathcal{P}}^2} \sum_{v,w\in Q_N^{\mathcal{P}}} \left \langle T_{v-w} h, h\right \rangle \\
&= \frac{1}{\labs{Q_N^\mathcal{P}}^2} \sum_{v,w\in Q_N^{\mathcal{P}}} \int_{\T^d} e((v-w)\cdot \alpha) \, d\sigma_h(\alpha)\\
&=  \int_{\T^d}  \labs{\frac{1}{\labs{Q_N^\mathcal{P}}}\sum_{v\in Q_N^{\mathcal{P}}} e(v\cdot \alpha)}^2 d\sigma_h(\alpha).
\end{align*}
By Proposition \ref{Prop: Formula for exp sums over primitives} and the dominated convergence theorem then
\begin{equation}\label{eq: prim avg kills Krat perp}
\lim_{N\to \infty} \lnorm{A_N h}_2^2  = \int_{\T^d} \labs{g(\alpha)}^2 d\sigma_h(\alpha) = 0
\end{equation}  
where in the final equality we use that $g(\alpha)=0$ for $\T^d$ with $\ord(\alpha)=\infty$ and that $\sigma_h(\Q^d/\Z^d)=0$. Clearly $Lh = 0$, and so we have that equation \eqref{eq: AN f goes to Lf} holds on $\mathcal{K}_{\text{rat}}$.

Now let $\alpha \in \T^d$ have $\ord(\alpha)=q<\infty$ and let $f_\alpha \in \Eig_T(\alpha)$. As $\Eig_T(\alpha)\subset \mathcal{K}_q$ then $L f_\alpha = c_q f_\alpha$ so we can calculate
\begin{align*}
\lnorm{A_N f_{\alpha} - L f_{\alpha}}_2^2 &= \lnorm{\frac{1}{\labs{Q_N^\mathcal{P}}} \sum_{v\in Q_N^{\mathcal{P}}} T_v f_\alpha - c_q f_\alpha}^2_2\\
&= \lnorm{\left(\frac{1}{\labs{Q_N^\mathcal{P}}}\sum_{v\in Q_N^{\mathcal{P}}} e(v\cdot \alpha)-\frac{\boldsymbol{\mu}(q)}{J_d(q)}\right)f_\alpha}_2^2\\
&\leq \labs{\frac{1}{\labs{Q_N^\mathcal{P}}}\sum_{v\in Q_N^{\mathcal{P}}} e(v\cdot \alpha)-\frac{\boldsymbol{\mu}(q)}{J_d(q)}}^2 \lnorm{f_\alpha}_2^2
\end{align*}
which goes to $0$ as $N\to \infty$ by Proposition \ref{Prop: Formula for exp sums over primitives} and so equation \eqref{eq: AN f goes to Lf} also holds for any eigenfunction of finite order. By linearity of $L$, the desired formula also holds for finite linear combinations of finite order eigenfunctions. Since finite linear combinations of finite order eigenfunctions are dense in $\mathcal{K}_\text{rat}$, then by a standard density argument using that $A_N$ and $L$ are both $L^2(\mu)$ contractions we conclude that equation \eqref{eq: AN f goes to Lf} also holds for any $f\in \mathcal{K}_{\text{rat}}$. Since $L^2(\mu) = \mathcal{K}_{\text{rat}} \oplus \mathcal{K}_{\text{rat}}^\perp$ then we are done.
\end{proof}
\begin{proof}[Proof of Corollary \ref{Cor: Primitive return times}]
Let $T:\Z^d \acts (X,\mu)$ be totally ergodic. It is easy to see that the only finite order eigenfunctions are constant a.e., and so for any $f\in L^2(\mu)$ we have that $P_q f = 0$ for all $q\geq 2$. By Theorem \ref{Th: MET for primitives} and ergodicity of $T$ we then have that
\[ \lim_{N\to \infty}\frac{1}{\labs{Q_N^\mathcal{P}}}\sum_{v\in Q_N^\mathcal{P}} T_v f = P_{\mathcal{I}} f = \int_X f \, d\mu.\] 
Now suppose $f =\mathbf{1}_A$ for some $A\subset X$ with $\mu(A)>0$. Then by continuity of the inner product and the previous equation we have that
\begin{align}
\lim_{N\to \infty}\frac{1}{\labs{Q_N^\mathcal{P}}}\sum_{v\in Q_N^\mathcal{P}} \mu(A\cap T_v A)& =   \left\langle \lim_{N\to \infty}\frac{1}{\labs{Q_N^\mathcal{P}}}\sum_{v\in Q_N^\mathcal{P}}T_v \mathbf{1}_A,\mathbf{1}_A\right\rangle \nonumber \\
&= \mu(A)^2. \label{eq: avg of A cap Tv A for TE}
\end{align}
For any $\varepsilon>0$ we must then have that
\[ \underline{d}_{Q_N^\mathcal{P}} (\{ v\in \mathcal{P} \, : \, \mu(A\cap T_v A)>\mu(A)^2 - \varepsilon\})>0\]
since otherwise we have a contradiction to equation \eqref{eq: avg of A cap Tv A for TE}.
\end{proof}
\section{Counting expansive vectors}
In this section we prove Theorem \ref{Th: Size of expansive vectors}.
\begin{lem}\label{Lemma: Kernels and densities}
Let $\alpha \in \T^d$ and consider the homomorphism $\phi_\alpha : \Z^d \to S^1$ taking $v \mapsto e(v\cdot \alpha)$. Then
\begin{equation}\label{eq: index of kernel is order}
[ \Z^d : \ker \phi_\alpha] = \ord(\alpha)
\end{equation}
and
\begin{equation}\label{eq: density of kernel is 1/order}
d_{Q_N}(\ker \phi_\alpha) := \lim_{N\to \infty} \frac{\labs{\ker \phi_\alpha \cap Q_N}}{\labs{Q_N}} = \frac{1}{[\Z^d : \ker \phi_\alpha]}
\end{equation}
where $1/\infty : = 0$.
\end{lem}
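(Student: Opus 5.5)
The plan is to identify the image of $\phi_\alpha$ with the cyclic group generated by the coordinates of $\alpha$, and then to count lattice points in cosets of a finite-index subgroup.

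\emph{Index.} Since $\Z^d/\ker\phi_\alpha\cong \phi_\alpha(\Z^d)$, the index equals $|\phi_\alpha(\Z^d)|$. The image is the subgroup of $S^1$ generated by $e(\alpha_1),\ldots,e(\alpha_d)$, where $\alpha=(\alpha_1,\dots,\alpha_d)$. We compare it with $\ord(\alpha)$ as follows.

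\emph{Case $\ord(\alpha)=q<\infty$.} Each $e(\alpha_i)$ is a $q$-th root of unity, so the image lies in the cyclic group $\mu_q$ of $q$-th roots of unity. A finite subgroup of $S^1$ is cyclic of the form $\mu_m$, so the image equals $\mu_m$ for some $m\mid q$. Then $e(m\alpha_i)=1$ for all $i$, which means $m\alpha=0$ in $\T^d$. Minimality of $q$ forces $m=q$, so the index is $q$.

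\emph{Case $\ord(\alpha)=\infty$.} If the image were finite, of order $m$, then $m\alpha=0$, a contradiction. Hence the image is infinite and the index is $\infty$.

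\emph{Density.} Suppose first that the index is $q<\infty$. The subgroup $\ker\phi_\alpha$ contains $q\Z^d$, because $e(qv\cdot\alpha)=1$ for every $v$. So $\ker\phi_\alpha$ is a union of exactly $q^{d-1}$ cosets of $q\Z^d$ (there are $q^d/q$ of them). Each coset $a+q\Z^d$ meets $Q_N$ in $\big((2N+1)/q+O(1)\big)^d$ points, since in each coordinate one counts the integers in $[-N,N]$ congruent to $a_i$ mod $q$. Summing over the cosets gives
\[
|\ker\phi_\alpha\cap Q_N| = q^{d-1}\Big(\frac{2N+1}{q}\Big)^d + O(N^{d-1}).
\]
Dividing by $|Q_N|=(2N+1)^d$ gives the limit $1/q$.

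Suppose instead that the index is infinite. For any $K$, the group $\ker\phi_\alpha$ has at least $K$ distinct cosets $b_1+\ker\phi_\alpha,\dots,b_K+\ker\phi_\alpha$. The translates $b_j+(\ker\phi_\alpha\cap Q_N)$ are pairwise disjoint and lie in $Q_{N+R}$, where $R=\max_j\|b_j\|_\infty$. Therefore
\[
K\,|\ker\phi_\alpha\cap Q_N|\le |Q_{N+R}| = |Q_N|(1+o_N(1)).
\]
It follows that $\limsup_N |\ker\phi_\alpha\cap Q_N|/|Q_N|\le 1/K$ for every $K$, so the limit exists and equals $0$.

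The main point requiring care is this infinite-index case, where no periodic structure is available; the translate-packing argument handles it. The rest is routine bookkeeping.
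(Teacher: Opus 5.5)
Your proof is correct. Its skeleton matches the paper's: you compute the index as the size of the image via the first isomorphism theorem, and you get the density from the coset decomposition of $\Z^d$.

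The sub-arguments differ in two places.

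For $\ord(\alpha)=q<\infty$, the paper writes $\alpha=a/q$ with $\gcd(a_1,\ldots,a_d,q)=1$ and uses B\'ezout to find $v$ with $\phi_\alpha(v)=e(1/q)$. You instead use that finite subgroups of $S^1$ are cyclic groups of roots of unity, together with the minimality of $q$. The two are equivalent, since the coprimality the paper invokes is itself a restatement of minimality.

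The more substantive difference is in the density part. The paper treats $d_{Q_N}$ as if it were a finitely additive, translation-invariant set function. It writes $1=d_{Q_N}\bigl(\bigsqcup_{i=1}^q (H+v_i)\bigr)=q\,d_{Q_N}(H)$ in the finite-index case and $M\,d_{Q_N}(H)\le 1$ in the infinite-index case. This tacitly assumes that the limit defining $d_{Q_N}(H)$ exists and that translates of $H$ have the same density. Making that rigorous needs a F{\o}lner-type remark about the boxes $Q_N$.

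Your version avoids this assumption. In the finite-index case, you use $q\Z^d\subseteq\ker\phi_\alpha$ to write $\ker\phi_\alpha$ as a union of $q^{d-1}$ cosets of $q\Z^d$ and count lattice points in each directly. This proves the limit exists and even gives an $O(1/N)$ rate. In the infinite-index case, packing $K$ disjoint translates into $Q_{N+R}$ is exactly the rigorous form of the paper's ``$M\,d_{Q_N}(H)\to\infty$'' contradiction. Because you phrase it with a $\limsup$, no existence assumption is needed.

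So your route takes a few more lines but is self-contained. The paper's is shorter but leaves the existence of the density, and its translation invariance, implicit.
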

\begin{proof}
We start with equation \eqref{eq: index of kernel is order}.
First consider the case that $\ord(\alpha)=\infty$. Suppose that $[\Z^d : \ker \phi_\alpha] < \infty$. By the first isomorphism theorem $\phi_\alpha(\Z^d) \cong \Z^d / \ker \phi_\alpha$ must be finite so there exists a positive integer $n$ such that $\phi_\alpha(v)^n = 1$ for all $v\in \Z^d$. But this means that $n\alpha \cdot v = 0$ in $\T^d$ for all $v \in \Z^d$, contradicting that $\ord(\alpha) = \infty$. Now if $\ord(\alpha)=q$ then there exist integers $0\leq a_1, \ldots,a_d \leq q-1$ with $\gcd(a_1,\ldots,a_d,q)=1$ such that $\alpha = (a_1,\ldots,a_d)/q$. It follows that 
\[ \phi_\alpha(\Z^d) \subset \{ e(a/q) \, : a \in \Z / q \Z\}\] and so $|\phi_\alpha(\Z^d)|\leq q$. On the other hand, B\'{e}zout's identity ensures there exists $v_1,\ldots,v_d,t \in \Z$ such that $(v_1,\ldots,v_d)\cdot(a_1,\ldots,a_d) + tq =1$, i.e. $\sum_{i=1}^d v_i a_i \equiv 1 \pmod{q}$ and so $e(1/q) \in \phi_\alpha(\Z^d)$ which implies that $|\phi_\alpha(\Z^d)|\geq q$. By the first isomorphism theorem then
\[ [\Z^d : \ker \phi_\alpha] = \labs{\Z^d / \ker \phi_\alpha} = \labs{\phi_\alpha(\Z^d)}=q.\]

We now prove equation \eqref{eq: density of kernel is 1/order}. Clearly if $H\leq \Z^d$ has infinite index then $d_{Q_N}(H) =0$, since otherwise for $\{v_i\}_{i=1}^\infty = \Z^d / H$ we can write
\begin{align*}
d_{Q_N}(\Z^d) &= d_{Q_N}\left( \bigsqcup_{i=1}^\infty H+v_i\right)\\
&\geq d_{Q_N}\left(  \bigsqcup_{i=1}^M H+v_i\right)\\
&= M d_{Q_N}\left( H\right) \to \infty \text{ as }M\to \infty.
\end{align*}
If $H\leq \Z^d$ has $[\Z^d : H ] = q$, then similarly
\[ 1 = d_{Q_N} (\Z^d) = d_{Q_N} \left( \bigsqcup_{i=1}^q H+v_i \right) = q d_{Q_N} (H)\]
which proves the lemma.
\end{proof}
For a vector $v\in \Z^d$ we define its annihilator $L_v^\perp \subset \T^d$ to be
\[ L_v^\perp : = \left \{ \alpha \in \T^d \, : \, v\cdot \alpha = 0 \text{ in }\T^d \right \}.\]
\begin{lem}\label{Lemma: formula for f}
For $\alpha \in \T^d$ consider
\[ f(\alpha) :=\lim_{N\to \infty} \frac{1}{|Q_N|} \sum_{v \in Q_N} \mathbf{1}_{L_v^\perp}(\alpha).\]
Then for every $\alpha \in \T^d$, $f(\alpha)$ exists and equals $1/\ord(\alpha)$, where $1/\infty : = 0$.
\end{lem}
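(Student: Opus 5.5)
Fix $\alpha\in\T^d$. The plan is to turn the average defining $f(\alpha)$ into the density of a subgroup and then apply Lemma \ref{Lemma: Kernels and densities}.

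First, the condition $\alpha\in L_v^\perp$ means $v\cdot\alpha=0$ in $\T^d$. This is equivalent to $e(v\cdot\alpha)=1$, that is, to $v\in\ker\phi_\alpha$, where $\phi_\alpha$ is the homomorphism of Lemma \ref{Lemma: Kernels and densities}. Hence, for every $N$,
\[
\frac{1}{|Q_N|}\sum_{v\in Q_N}\mathbf{1}_{L_v^\perp}(\alpha)=\frac{\labs{\ker\phi_\alpha\cap Q_N}}{\labs{Q_N}}.
\]

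Equation \eqref{eq: density of kernel is 1/order} states that the right-hand side converges as $N\to\infty$, with limit $1/[\Z^d:\ker\phi_\alpha]$. Equation \eqref{eq: index of kernel is order} identifies this index with $\ord(\alpha)$, using the convention $1/\infty=0$. Together these show that $f(\alpha)$ exists and equals $1/\ord(\alpha)$.

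The only point needing care is the existence of the limit. This is the step I expect to be the main obstacle, because the proof of Lemma \ref{Lemma: Kernels and densities} manipulates $d_{Q_N}$ as if the limit already existed. I would close this gap directly.

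When $[\Z^d:H]=q<\infty$, with $H=\ker\phi_\alpha$, the subgroup $H$ contains $q\Z^d$. So $H$ is a union of cosets of $q\Z^d$, and the number of such cosets is $q^d/q$. Each coset of $q\Z^d$ meets $Q_N$ in $(2N+1)^d/q^d+O(N^{d-1})$ points. Summing over the cosets gives
\[
\labs{H\cap Q_N}=\frac{|Q_N|}{q}+O(N^{d-1}),
\]
which yields the limit $1/q$.

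When the index is infinite, I would use the upper density instead. For any $M$, choose $M$ distinct cosets $H+v_1,\ldots,H+v_M$. Their translates of $Q_N$ are disjoint subsets of $Q_{N+R}$ with $R=\max_i|v_i|_\infty$. This gives
\[
M\,\labs{H\cap Q_N}\leq |Q_{N+R}|,
\]
so $\limsup_N \labs{H\cap Q_N}/|Q_N|\leq 1/M$ for every $M$. The limit therefore exists and equals $0$.
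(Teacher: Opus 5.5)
Your proposal is correct and follows the paper's route: rewrite $\mathbf{1}_{L_v^\perp}(\alpha)$ as $\mathbf{1}_{\ker\phi_\alpha}(v)$ and invoke Lemma~\ref{Lemma: Kernels and densities}. Your direct proof that the limit exists is a useful addition, since the paper's proof of that lemma manipulates $d_{Q_N}$ as if the limit were already known to exist. In the finite-index case you use $q\Z^d\subseteq\ker\phi_\alpha$, and in the infinite-index case you use the disjoint translates $v_i+(\ker\phi_\alpha\cap Q_N)\subseteq Q_{N+R}$.
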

\begin{proof}
Note that for fixed $\alpha$, $\mathbf{1}_{L_v^\perp}(\alpha) = \mathbf{1}_{\ker\phi_\alpha}(v)$, so by Lemma \ref{Lemma: Kernels and densities} we have that
\[ f(\alpha) = d_{Q_N}(\ker \phi_\alpha) = \frac{1}{\ord(\alpha)}.\]
\end{proof}
We will also need the follow essential fact from \cite{BjF}.
\begin{lem}[{\cite[Lemma 3.2]{BjF}}] \label{Lemma: Michael Lemma}
Given an probability preserving action $T: \Z^d \acts (X,\mu)$, a set $A\subset X$ with $\mu(A)>0$ and a vector $v\in \Z^d$, we have that
\[ \mu \left(\bigcup_{n \in \Z} T_{nv} A\right) \geq \frac{\sigma_A(\{0\})}{\sigma_A(L_v^\perp)}.\]
\end{lem}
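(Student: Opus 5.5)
The plan is to work with the $T_v$-invariant hull $B:=\bigcup_{n\in\Z}T_{nv}A$ and with the mean ergodic theorem for the single unitary $T_v$. Let $f=\mathbf{1}_A$ and let $P$ be the orthogonal projection onto the $T_v$-invariant functions in $L^2(\mu)$. By von Neumann,
\[
Pf=\lim_{N\to\infty}\frac1N\sum_{n=0}^{N-1}T_{nv}f \quad\text{in } L^2(\mu).
\]

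First I compute $\lnorm{Pf}_2^2$ spectrally. Since $\lnorm{Pf}_2^2=\langle Pf,f\rangle$, I can write
\[
\lnorm{Pf}_2^2=\lim_N \frac1N\sum_{n=0}^{N-1}\int_{\T^d} e(nv\cdot\alpha)\,d\sigma_A(\alpha).
\]
The Cesàro averages of $e(nv\cdot\alpha)$ tend boundedly to $\mathbf{1}_{L_v^\perp}(\alpha)$. Dominated convergence then gives $\lnorm{Pf}_2^2=\sigma_A(L_v^\perp)$.

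Second, $Pf$ is supported on $B$. Each $T_{nv}f$ vanishes off $B$, because $B$ is $T_v$-invariant, and $L^2$-limits preserve this. Hence $Pf=\mathbf{1}_B Pf$.

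Third, in the ergodic case $\sigma_A(\{0\})=\mu(A)^2$, and Cauchy--Schwarz finishes the argument:
\[
\mu(A)^2=\Bigl(\int Pf\,d\mu\Bigr)^2=\Bigl(\int \mathbf{1}_B\,Pf\,d\mu\Bigr)^2\le \mu(B)\,\lnorm{Pf}_2^2=\mu(B)\,\sigma_A(L_v^\perp).
\]
This gives $\mu(B)\ge \mu(A)^2/\sigma_A(L_v^\perp)=\sigma_A(\{0\})/\sigma_A(L_v^\perp)$.

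The main obstacle is the non-ergodic case, where $\sigma_A(\{0\})=\lnorm{P_{\mathcal I}f}_2^2$ may exceed $\mu(A)^2$. The argument above then yields only the weaker numerator $\mu(A)^2$.

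My first attempt would replace $\int Pf$ by a pairing with $P_{\mathcal I}f$. Since $P_{\mathcal I}f$ is $T_v$-invariant, $P_{\mathcal I}\le P$, so
\[
\sigma_A(\{0\})=\langle f,P_{\mathcal I}f\rangle=\langle Pf,\mathbf{1}_B P_{\mathcal I}f\rangle\le \lnorm{Pf}_2\,\lnorm{\mathbf{1}_BP_{\mathcal I}f}_2 .
\]
It would then remain to show $\lnorm{\mathbf{1}_BP_{\mathcal I}f}_2^2\le \mu(B)\,\sigma_A(\{0\})$, or some suitable substitute. I would try to get this by conditioning on the $\Z^d$-invariant $\sigma$-algebra $\mathcal I$. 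Using $0\le P_{\mathcal I}f\le 1$, I would try to compare $\int_B (P_{\mathcal I}f)^2$ with $\int \mathbf{1}_B\,P_{\mathcal I}f$, and then compare $\langle P_{\mathcal I}\mathbf{1}_B,f\rangle$ with $\mu(B)$ via invariance of $\mu(B)$ under translates $T_wB$.

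If this fails, the fallback is ergodic decomposition $\mu=\int\mu_x$. One applies the ergodic case to each component and must then recombine. Here one has to be careful, because a naive Cauchy--Schwarz recombination only gives $\sigma_A(\{0\})^2/\sigma_A(L_v^\perp)$. So I expect the genuine difficulty to lie precisely in this averaging step, and I would need to exploit the relation $\sigma_{A,x}(\{0\})=\mu_x(A)^2\le \sigma_{A,x}(L_v^\perp)\le\mu_x(A)$ more cleverly.
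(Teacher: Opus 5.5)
The paper gives no proof of this lemma; it only cites \cite[Lemma 3.2]{BjF}. Your argument therefore has to stand on its own, and its first three steps do. The Ces\`aro computation gives $\|Pf\|_2^2=\sigma_A(L_v^\perp)$. The function $Pf$ vanishes off the $T_v$-invariant set $B$. And $\int Pf\,d\mu=\langle f,P\mathbf{1}\rangle=\mu(A)$, so Cauchy--Schwarz yields $\mu(A)^2\leq\mu(B)\,\sigma_A(L_v^\perp)$. Ergodicity is never used here: you have proved $\mu(B)\geq \mu(A)^2/\sigma_A(L_v^\perp)$ for \emph{every} probability preserving action. This is the stated bound whenever $\sigma_A(\{0\})=\mu(A)^2$, in particular for ergodic $T$.

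What you call the main obstacle cannot be overcome, because with numerator $\sigma_A(\{0\})$ the statement is false for non-ergodic actions. Take the trivial action $T_v=\mathrm{id}$ and any $A$ with $0<\mu(A)<1$. Then $\langle \mathbf{1}_A,T_v\mathbf{1}_A\rangle=\mu(A)$ for all $v$, so $\sigma_A=\mu(A)\delta_0$ and $\sigma_A(\{0\})=\sigma_A(L_v^\perp)=\mu(A)$. The right-hand side is therefore $1$, while $\bigcup_n T_{nv}A=A$ has measure $\mu(A)<1$. A less degenerate system fails the same way: take $X=Y\times\{1,2\}$ with $T$ ergodic on $Y$ and trivial on the second factor, and $A=Y\times\{1\}$.

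Your intermediate target $\|\mathbf{1}_BP_{\mathcal{I}}f\|_2^2\leq\mu(B)\,\sigma_A(\{0\})$ also fails in this example: the left side is $\mu(A)$ and the right side is $\mu(A)^2$. No conditioning or ergodic-decomposition argument can rescue it. The correct conclusion is that the lemma needs an ergodicity hypothesis, or $\mu(A)^2$ in place of $\sigma_A(\{0\})$. With that correction, your three steps are a complete proof.

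The same example affects the paper downstream. The proof of Theorem~\ref{Th: Size of expansive vectors} uses the $\sigma_A(\{0\})$ form for arbitrary actions. Yet for the trivial action $\sigma_A(\Rat(M))=0$, while no direction is $\varepsilon$-expansive once $\varepsilon\leq 1-\mu(A)$.
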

\begin{proof}[Proof of Theorem \ref{Th: Size of expansive vectors}]
Fix some $\delta,\varepsilon,\eta>0$. Let $T:\Z^d \acts (X,\mu)$ be a probability preserving action and suppose $A\subset X$ has $\mu(A)\geq\delta$.  Suppose that $\sigma_A(\Rat(M))<\kappa$ for $M$ and $\kappa$ to be determined later. By Lemma \ref{Lemma: formula for f} and the dominated convergence theorem we have that
\begin{align}
\lim_{N\to \infty} \frac{1}{\labs{Q_N}}\sum_{v\in Q_N} \sigma_A(L_{v}^\perp) & = \lim_{N\to \infty}\int_{\T^d} \frac{1}{\labs{Q_N}}\sum_{v\in Q_N} \mathbf{1}_{L_{v}^\perp}(\alpha)\, d \sigma_A(\alpha) \nonumber \\
&=\sigma_A(\{0\}) + \sum_{q=2}^\infty \frac{1}{q} \sigma_A(\{\alpha\in \T^d \, : \, \ord(\alpha)=q\}). \label{eq: Lv perp average} \nonumber
\end{align}
Set $Y(v): = \sigma_A(L_v^\perp) - \sigma_A(\{0\})\geq 0$. The previous equation is then equivalent to
\[ \lim_{N\to \infty} \frac{1}{\labs{Q_N}}\sum_{v\in Q_N} Y(v) = \underbrace{\sum_{q=2}^\infty \frac{1}{q} \sigma_A(\{\alpha\in \T^d \, : \, \ord(\alpha)=q\})}_{:=L(A)}.\]
For $t>0$ consider
\[E(t) = \{ v\in \Z^d \, : \, Y(v)\geq t\}.\]
By Markov's inequality we have that $\overline{d}_{Q_N}(E(t))\leq L(A)/t$ which is equivalent to
\begin{equation}\label{eq: Markov on E(t) compliment}
\underline{d}_{Q_N}(E(t)^c) \geq 1-\frac{L(A)}{t}.
\end{equation}
By definition, any $v\in E(t)^c$ has that $\sigma_A(L_v^\perp)<\sigma_A(\{0\}) + t$, and so by Lemma \ref{Lemma: Michael Lemma} must satisfy that
\[ \mu\left(\bigcup_{n\in \Z}T_{nv}A\right) > 1-\frac{t}{\sigma_A(\{0\})+t} > 1-\varepsilon,\]
where the last inequality will hold provided that we pick $t$ small enough in terms of $\sigma_A(\{0\})$ and $\varepsilon$. In particular, we can take $t = \sigma_{A}(\{0\})\varepsilon$ so that
\[  \left\{v \in \Z^d \, : \, \mu\left(\bigcup_{n\in \Z}T_{nv}A\right)>1-\varepsilon\right \} \supset E(t)^c.\]
By equation \eqref{eq: Markov on E(t) compliment} then
\begin{align}
\underline{d}_{Q_N} \left(\left\{v \in \Z^d \, : \, \mu\left(\bigcup_{n\in \Z}T_{nv}A\right)>1-\varepsilon\right \}\right) &\geq \underline{d}_{Q_N}(E(t)^c) \nonumber \\
&\geq 1-\frac{L(A)}{\sigma_A(\{0\})\varepsilon}.\label{eq: ineq with LA}
\end{align}
Now pick
\[ M = \frac{2}{\delta^2 \varepsilon \eta} \qquad \text{and} \qquad \kappa = \frac{\delta^2 \varepsilon \eta}{2}.\]
Then
\begin{align}
L(A) &= \sum_{q=2}^{\infty} \frac{1}{q}\sigma_A(\{\alpha\in \T^d \, : \, \ord(\alpha)=q\}) \nonumber \\
&\leq \sigma_A(\Rat(M)) + 1/M \nonumber \\
&<\delta^2 \varepsilon \eta \leq \sigma_{A}(\{0\}) \varepsilon \eta, \label{eq: LA bound}
\end{align}
where in the last inequality we use that $\delta^2\leq \mu(A)^2 \leq \sigma_A(\{0\})$. The conclusion then follows from equations \eqref{eq: ineq with LA} and \eqref{eq: LA bound}.
\end{proof}

\section{The measure increment argument}
We now show how Corollary \ref{Cor: Expansive vectors} follows from Theorem \ref{Th: Size of expansive vectors} via the ergodic measure increment argument introduced in \cite{FS}. We recall the relevant background.
\begin{pro}[$T^k$-ergodic components {\cite[Proposition A.2]{Bu}}]\label{Prop: Tk ergodic components}
Let $T: \Z^d \acts (X,\mu)$ act ergodically. For any positive integer $k$ there exist finitely many $k\Z^d$-invariant and ergodic probability measures $\nu_1,\ldots,\nu_n$ with disjoint supports such that
\begin{equation*}
\mu = \frac{1}{n}\sum_{i=1}^n \nu_i.
\end{equation*}
In particular $\nu_i \ll \mu$ for each $i=1,\ldots,n$. We call $\nu_1,\ldots,\nu_n$ the $T^k$-ergodic components of $\mu$.
\end{pro}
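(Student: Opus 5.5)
The plan is to work with the $\sigma$-algebra $\mathcal{I}_k$ of $k\Z^d$-invariant sets (modulo $\mu$-null sets). I will show it is generated by finitely many atoms of equal measure, and then define the $\nu_i$ as normalized restrictions of $\mu$ to these atoms. Throughout, write $H=k\Z^d$ and fix coset representatives $w_1,\ldots,w_{k^d}$ of $\Z^d/H$.

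First I prove a lower bound on measures of invariant sets: every $B\in\mathcal{I}_k$ with $\mu(B)>0$ satisfies $\mu(B)\geq k^{-d}$. Since $\Z^d$ is abelian, each $T_{w_j}B$ is again $H$-invariant. The set
\[ \bigcup_{j=1}^{k^d} T_{w_j}B \]
is invariant under all of $\Z^d$: translating by $v=w_j'+h$ with $h\in H$ only permutes the pieces up to $H$-translates, and $H$-translates of each piece are the piece itself. By ergodicity of $T$ this union has full measure. Subadditivity then gives $1\leq k^d\mu(B)$.

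This bound makes the measure algebra of $\mathcal{I}_k$ atomic with at most $k^d$ atoms. To see this, choose an $H$-invariant set $A_1$ of positive measure that is minimal, meaning it has no $H$-invariant subset of strictly intermediate measure. Such a set exists by a descending argument, since positive measures in $\mathcal{I}_k$ cannot decrease below $k^{-d}$. Removing $A_1$ and repeating, the process must stop after at most $k^d$ steps, yielding disjoint atoms $A_1,\ldots,A_n$ whose union has full measure. For each $v\in\Z^d$, the map $T_v$ preserves $\mathcal{I}_k$ and $\mu$, so it permutes these atoms up to null sets. The $\Z^d$-orbit of $A_1$ has a $\Z^d$-invariant union of positive measure, hence full measure by ergodicity. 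So the action on atoms is transitive, and since $\mu$ is preserved, all atoms have measure $1/n$.

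Now set $\nu_i:=n\,\mu(\,\cdot\,\cap A_i)$. Each $\nu_i$ is a probability measure, and $\nu_i\ll\mu$. It is $H$-invariant because $A_i$ is $H$-invariant and $\mu$ is preserved. It is $H$-ergodic, since any $H$-invariant set $C$ gives an $H$-invariant set $C\cap A_i$, which by atomicity is null or equal to $A_i$ up to a null set. The supports are disjoint, and $\mu=\frac1n\sum_i\nu_i$ because the $A_i$ partition $X$ mod $\mu$.

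The only delicate step is making the atom construction rigorous modulo null sets, in particular confirming that a minimal positive-measure $H$-invariant set exists. I would handle this cleanly in a different way: by the lower bound, $L^2(X,\mathcal{I}_k,\mu)$ contains at most $k^d$ pairwise disjoint indicator functions of positive-measure sets. Hence this space is finite-dimensional, spanned by the indicators of its atoms.
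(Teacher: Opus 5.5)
Your proof is correct. The paper gives no argument of its own for this proposition: it is quoted from \cite[Proposition A.2]{Bu} and used as a black box in the measure increment argument, so there is no in-paper proof to compare against. What you wrote is a complete, self-contained proof along the standard lines. It rests on the bound $\mu(B)\geq k^{-d}$ for $k\Z^d$-invariant $B$ of positive measure, followed by transitivity of $\Z^d/k\Z^d$ on the resulting atoms.

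Two steps deserve one more line each.

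\textbf{Termination of the descent.} Your descent toward a minimal invariant set does not terminate merely because positive measures are bounded below by $k^{-d}$. It terminates because a proper split of an $H$-invariant $B$ into $B'$ and $B\setminus B'$, both $H$-invariant of positive measure, lowers the measure by at least $k^{-d}$ (apply your bound to $B\setminus B'$). For your $L^2$ variant, the clean formulation is this: choose a partition of $X$ into $H$-invariant sets of positive measure with the largest possible number $n\leq k^d$ of pieces. Each piece is then an atom, and every $H$-invariant set is a union of pieces mod $\mu$.

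\textbf{Ergodicity of $\nu_i$.} This should be checked for sets $C$ that are $H$-invariant only modulo $\nu_i$-null sets. Since $T_hA_i=A_i$, one gets $\mu\bigl(T_h(C\cap A_i)\triangle (C\cap A_i)\bigr)=\frac1n\nu_i(T_hC\triangle C)=0$. So $C\cap A_i$ is $H$-invariant mod $\mu$, and atomicity applies to it.

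An equally short alternative, closer to the spectral language of this paper, goes as follows. $\Z^d$ acts on the $k\Z^d$-invariant functions through the finite group $\Z^d/k\Z^d$, so that space is spanned by eigenfunctions with eigenvalues in $(\frac1k\Z/\Z)^d$. Each eigenspace is at most one-dimensional by ergodicity, since a quotient of two eigenfunctions with the same eigenvalue is invariant. This gives $\dim\leq k^d$ directly, and the atom description then follows by the maximal-partition argument above.
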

\begin{lem}[{\cite[Lemma 3.3]{FS}}]\label{Lemma: Increment}
Let $T:\Z^d \acts (X,\mu)$ be an ergodic action and let $A\subset X$ have $\mu(A)>0$. For any $M\in \Z_{>0}$ there exists some $T^{M!}$-ergodic component $\nu$ of $\mu$ such that
\[\nu(A)\geq \sqrt{\mu(A)^2 + \sigma_A(\Rat(M))}.\]
\end{lem}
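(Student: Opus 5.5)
We prove Lemma \ref{Lemma: Increment} by computing $\int \mathbf{1}_A\, d\nu_i$ through the projection of $\mathbf{1}_A$ onto the $k\Z^d$-invariant functions, where $k=M!$, and then averaging over the components.

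\emph{Step 1: the components.} By Proposition \ref{Prop: Tk ergodic components}, write $\mu=\frac1n\sum_{i=1}^n\nu_i$, where the $\nu_i$ have disjoint supports $X_i$ and each $\mu(X_i)=1/n$. Then $\nu_i=n\,\mu|_{X_i}$, and the $k\Z^d$-invariant $L^2(\mu)$ functions are exactly those constant on each $X_i$. This uses $k\Z^d$-ergodicity of each $\nu_i$.

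\emph{Step 2: identify the projection.} Let $P$ be the projection onto the $k\Z^d$-invariant functions. Then $P\mathbf{1}_A=\sum_i \nu_i(A)\mathbf{1}_{X_i}$, and hence
\[
\lnorm{P\mathbf{1}_A}_2^2=\frac1n\sum_i\nu_i(A)^2\leq \max_i \nu_i(A)^2.
\]
On the other hand, a function is $k\Z^d$-invariant iff its spectral measure is supported on $\{\alpha: k\alpha=0\}$. Decompose into eigenspaces using the finite group $\Z^d/k\Z^d$, i.e. via characters $e(v\cdot\alpha)$ with $k\alpha=0$: the invariant subspace equals $\bigoplus_{k\alpha=0}\Eig_T(\alpha)$. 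Concretely, $P=\frac{1}{k^d}\sum_{v\in[0,k)^d}T_v\circ P_{\mathrm{inv}}$; more simply, $P_{\Eig_T(\alpha)}$ for $k\alpha=0$ are orthogonal pieces. Therefore
\[
\lnorm{P\mathbf{1}_A}_2^2=\sum_{k\alpha=0}\lnorm{P_{\Eig_T(\alpha)}\mathbf{1}_A}_2^2=\sum_{k\alpha=0}\sigma_A(\{\alpha\}),
\]
using the identity $\lnorm{P_{\Eig_T(\alpha)}h}_2^2=\sigma_h(\{\alpha\})$ quoted in the proof of Theorem \ref{Th: MET for primitives}.

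\emph{Step 3: conclude.} Every $\alpha$ with $\ord(\alpha)\leq M$ satisfies $M!\,\alpha=0$. So the sum in Step 2 is at least $\sigma_A(\{0\})+\sigma_A(\Rat(M))$. By ergodicity of $T$, $\sigma_A(\{0\})=\mu(A)^2$. Combining with the bound from Step 2 gives
\[
\max_i\nu_i(A)^2\geq \mu(A)^2+\sigma_A(\Rat(M)),
\]
and we take $\nu$ to be the maximizing component.

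The main obstacle is Step 2's identification of the $k\Z^d$-invariant subspace with $\bigoplus_{k\alpha=0}\Eig_T(\alpha)$. This needs the finite-group character decomposition: for invariant $f$, set $f_\alpha=\frac{1}{k^d}\sum_{v\in(\Z/k)^d}\overline{e(v\cdot\alpha)}T_vf$, which is well defined since $f$ is $k\Z^d$-invariant. Then $f=\sum_\alpha f_\alpha$ and each $f_\alpha\in\Eig_T(\alpha)$. Conversely, each such eigenfunction is clearly $k\Z^d$-invariant.
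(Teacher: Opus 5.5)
Your proof is correct and complete. The paper does not prove this lemma itself; it imports it from \cite{FS} as a black box, so there is no in-paper argument to compare with, and what you give is a valid self-contained proof. All of your steps check out:
\begin{itemize}
\item the identification $P\mathbf{1}_A=\sum_i\nu_i(A)\mathbf{1}_{X_i}$;
\item the decomposition of the $k\Z^d$-invariant subspace as $\bigoplus_{k\alpha=0}\Eig_T(\alpha)$, proved in your final paragraph;
\item the quoted identity $\lnorm{P_{\Eig_T(\alpha)}h}_2^2=\sigma_h(\{\alpha\})$, which is valid for every $h$, not only $h\perp\mathcal{K}_{\text{rat}}$. With the paper's conventions it is really $\sigma_h(\{-\alpha\})$, but this is harmless because $\{\alpha:k\alpha=0\}$ and $\Rat(M)$ are symmetric;
\item the observation that $\ord(\alpha)\leq M$ forces $M!\,\alpha=0$.
\end{itemize}

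One sentence should be removed: the ``concrete'' formula $P=\frac{1}{k^d}\sum_{v\in[0,k)^d}T_v\circ P_{\mathrm{inv}}$ is false. Read $P_{\mathrm{inv}}$ either as the projection onto $\Z^d$-invariant functions or onto $k\Z^d$-invariant functions. In both cases the right-hand side is the projection onto the $\Z^d$-invariant functions, i.e.\ only the $\alpha=0$ summand. The correct concrete statement is $P_{\Eig_T(\alpha)}=\frac{1}{k^d}\sum_{v\in[0,k)^d}\overline{e(v\cdot\alpha)}\,T_vP$ for $k\alpha=0$. That is exactly what your last paragraph proves, and nothing else in the argument depends on the faulty line.

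If you want a shorter Step 2, you can bypass both the character decomposition and the eigenspace identity. The mean ergodic theorem for the sub-action $v\mapsto T_{kv}$, together with dominated convergence, gives
\begin{align*}
\lnorm{P\mathbf{1}_A}_2^2&=\lim_{N\to\infty}\frac{1}{\labs{Q_N}}\sum_{v\in Q_N}\mu(A\cap T_{kv}A)\\
&=\lim_{N\to\infty}\int_{\T^d}\frac{1}{\labs{Q_N}}\sum_{v\in Q_N}e(kv\cdot\alpha)\,d\sigma_A(\alpha)=\sigma_A(\{\alpha\in\T^d:k\alpha=0\}).
\end{align*}
This holds because the inner average is a product of one-dimensional geometric averages converging pointwise to $\mathbf{1}_{\{k\alpha=0\}}$. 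It is the same device the paper uses in the proofs of Theorem~\ref{Th: Size of expansive vectors} and Proposition~\ref{Prop: Density of big return times}. Your version takes a little more work but yields the extra structural fact that the $k\Z^d$-invariant subspace is exactly $\bigoplus_{k\alpha=0}\Eig_T(\alpha)$.
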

\begin{proof}[Proof of Corollary \ref{Cor: Expansive vectors}]
Fix $\delta,\varepsilon,\eta>0$ and let $M= M(\delta,\varepsilon,\eta)$ and $\kappa=\kappa(\delta,\varepsilon,\eta)$ be as in Theorem \ref{Th: Size of expansive vectors}. Let $T:\Z^d \acts (X,\mu)$ act ergodically and suppose $A\subset X$ has $\mu(A)\geq \delta$. If the conclusion holds with $k=1$ and $\nu=\mu$ we are done. Otherwise by Theorem \ref{Th: Size of expansive vectors} we must have that $\sigma_A(\Rat(M))\geq \kappa$, and so by Lemma \ref{Lemma: Increment} we can find some $T^{M!}$-ergodic component $\nu_1$ of $\mu$ with
\[ \nu_1(A)\geq \sqrt{\mu(A)^2 + \kappa} \geq \mu(A) + \frac{\kappa}{3}.\]
Let $k_1 = M!$ and set $S_v:= T_{k_1 v}$ for all $v\in \Z^d$. If our conclusion holds with $k=k_1$ and $\nu=\nu_1$ then we are done. Otherwise, since $S:\Z^d \acts (X,\nu_1)$ is an ergodic system and $\nu_1(A)> \mu(A)\geq \delta$ then by Theorem \ref{Th: Size of expansive vectors} and Lemma \ref{Lemma: Increment} again we can find some $T^{k_1^2}$-ergodic component $\nu_2$ of $\mu$ with
\[ \nu_2(A)\geq \nu_1(A)+\frac{\kappa}{3}\geq \mu(A) + \frac{2\kappa}{3}.\]
Repeating in this way, we must reach our conclusion with $k = k_1^j = (M!)^j$ for some $j\leq \ceil{3/\kappa}$, since otherwise we will find a probability measure $\nu$ with $\nu(A) > 1$, which is clearly a contradiction. Hence the theorem holds with $k_0 = (M!)^{\ceil{3/\kappa}}$.
\end{proof}

Theorem \ref{Theorem: Quantitative multiple twisted recurrence} follows from the exact same measure increment argument combined with the following Theorem, which states, in complete analogy with Theorem \ref{Th: Size of expansive vectors}, that the conclusion of Theorem \ref{Theorem: Quantitative multiple twisted recurrence} can always be reached with $k=1$ provided that $\sigma_A(\Rat(M))$ is sufficiently small.

\begin{thm}\label{Theorem: Small ratM gives conclusion}
For any $D\in\Z_{>0}$ and $\delta>0$ there exist some $\kappa=\kappa(D,\delta)>0$ and $M=M(D,\delta)>0$ such that the following holds. For any probability preserving action $T:\Z^d\acts (X,\mu)$ and any $A\subset X$ with $\mu(A)\geq \delta$, if $\sigma_A(\Rat(M))<\kappa$ then for any $v_1,\ldots,v_d\in \Z^d$ with $0< \labs{\det(v_1,\ldots,v_d)}\leq D$ there exists some $\gamma \in \SL_d(\Z)$ with
\begin{equation}\label{eq: Recurrence conclusion with k=1}
\mu\left( A\cap T_{\gamma v_1}A\cap \ldots \cap T_{\gamma v_d}A\right)>0.
\end{equation}
\end{thm}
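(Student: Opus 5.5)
The plan is to build the configuration one vector at a time. At each step I would use the freedom in $\gamma\in\SL_d(\Z)$ through unipotent ("shear") matrices, and use the spectral smallness $\sigma_A(\Rat(M))<\kappa$ to make the shear direction behave like an expansive direction in the sense of the proof of Theorem \ref{Th: Size of expansive vectors}.

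\textbf{Normalisation.} Since $0<\labs{\det(v_1,\ldots,v_d)}\leq D$, the lattice $\Lambda=\mathrm{span}_\Z(v_1,\ldots,v_d)$ has index at most $D$. Hence $D!\,\Z^d\subset\Lambda$, and each partial lattice $\Lambda_k=\mathrm{span}_\Z(v_1,\ldots,v_k)$ has index at most $D$ in its saturation $\Lambda_k^{\mathrm{sat}}=(\R\Lambda_k)\cap\Z^d$. Because $\SL_d(\Z)$ acts transitively on primitive flags, I can choose $\gamma_0$ so that $\gamma_0\Lambda_k^{\mathrm{sat}}=\mathrm{span}(e_1,\ldots,e_k)$ for each $k$. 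I then look for $\gamma=\gamma_0'\gamma_0$ with $\gamma_0'$ a product of shears.

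\textbf{Inductive step.} Write $w_i$ for the current images of the $v_i$, and set $C_k=A\cap T_{w_1}A\cap\cdots\cap T_{w_k}A$. Suppose $\mu(C_{k})\geq c_k(D,\delta)>0$. Choose an integral functional $\varphi$ vanishing on $\Lambda_k$ with $\varphi(w_{k+1})=:c\neq0$. Then $1\leq\labs{c}\leq D$ after the normalisation. Choose also a primitive $u$ with $\varphi(u)=0$, which allows $u$ to lie outside $\Lambda_k^{\mathrm{sat}}$ when $k<d-1$. For every $n$, the shear $\gamma_n=I+n\,u\varphi^{T}\in\SL_d(\Z)$ fixes $w_1,\ldots,w_k$. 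It sends $w_{k+1}$ to $w_{k+1}+ncu$, and later vectors only move inside the span we are free to re-shear at later steps. Therefore
\[
\mu(C_k\cap T_{\gamma_n w_{k+1}}A)=\mu(C_k\cap T_{ncu}T_{w_{k+1}}A).
\]
Averaging over $n$ and applying the mean ergodic theorem for the $\Z$-action of $T_{cu}$ turns this average into $\langle \mathbf{1}_{C_k},P_{cu}T_{w_{k+1}}\mathbf{1}_A\rangle$, where $P_{cu}$ is the projection onto $T_{cu}$-invariant functions. Its distance from the $\Z^d$-invariant projection is controlled by $\sigma_A(L_{cu}^\perp)-\sigma_A(\{0\})=Y(cu)$, as in the proof of Lemma \ref{Lemma: Michael Lemma}. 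The proof of Theorem \ref{Th: Size of expansive vectors} shows that $Y(v)<t$ for all $v$ outside a set of upper density at most $L(A)/t$, and $L(A)\leq\kappa+1/M$.

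\textbf{Choice of $u$.} Since the set of $v$ with $Y(mv)<t$ for all $m\leq D$ has density close to $1$, and the choice of $\varphi$ leaves a full-rank family of admissible directions (after re-choosing $\gamma_0$ inside the stabiliser of the flag), a good $u$ exists. With it I get some $n$ with
\[
\mu(C_{k+1})\gtrsim \langle\mathbf{1}_{C_k},P_{\mathcal I}\mathbf{1}_A\rangle-O(\sqrt t).
\]
In the ergodic case, which is the one needed for Theorem \ref{Theorem: Quantitative multiple twisted recurrence} via the increment argument, this is at least $c_k\delta-O(\sqrt t)$. So I set $c_{k+1}=c_k\delta/2$ and choose $t$, and hence $M$ and $\kappa$, depending only on $D$ and $\delta$ so that all $d$ steps work.

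\textbf{Main obstacle.} The hardest part is making the lower bound quantitative and uniform. A purely qualitative use of Lemma \ref{Lemma: Michael Lemma} (a set of measure $>1-\varepsilon$ meets $C_k$) only works if $\mu(C_k)>\varepsilon$. That forces the chain $c_1>c_2>\cdots$ and the replacement of expansivity by the $L^2$ estimate above. The other delicate point is bookkeeping: the shear directions $u$ chosen at step $k$ must lie where later shears do not destroy earlier coincidences. I would handle this by choosing all shears to be upper-triangular with respect to the flag fixed by $\gamma_0$. For a non-ergodic $\mu$, I would first try replacing $\mu(A)$ by the bound $\langle\mathbf{1}_{C_k},P_{\mathcal I}\mathbf{1}_A\rangle\geq \langle \mathbf{1}_{C_k},P_{\mathcal I}\mathbf{1}_{C_k}\rangle\geq\mu(C_k)^2$, using $C_k\subset A$. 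This costs only a worse $c_{k+1}=c_k^2/2$.
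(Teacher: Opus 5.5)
There is a genuine gap, and it sits in the step \emph{Choice of $u$}. Your $L^2$ estimate itself is correct: since $P_{\mathcal I}\leq P_{cu}$ and both commute with $T_{w_{k+1}}$, the error is exactly $\sqrt{Y(cu)}$. The trouble is that the admissible shear directions never form a set of positive density, so no good $u$ is guaranteed to exist among them.
\begin{itemize}
\item At every step $u$ must lie in $\ker\varphi$.
\item At the last step $k=d-1$ the functional $\varphi$ is forced, so $u$ is confined to the hyperplane $\R\Lambda_{d-1}\cap\Z^d$, which your earlier choices determine.
\item With upper-triangular shears, $u$ is even confined to $\Lambda_k^{\mathrm{sat}}$ at every step. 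Re-choosing $\gamma_0$ in the flag stabiliser does not move these sublattices.
\end{itemize}
A set of lower density close to $1$ in $Q_N$ says nothing about a density-zero sublattice $H$. Running the Markov argument on $H$ instead would require $\sigma_A$ to be small on the circle $H^\perp\setminus\{0\}$. That circle contains irrational points, so it is not controlled by $\sigma_A(\Rat(M))$.

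This is not just a presentation issue. Take $T_vx=x+v_d\theta$ on $\T$ with $\theta$ irrational.
\begin{itemize}
\item The action is ergodic, and $\sigma_A$ lives on $\{(0,\ldots,0,j\theta)\}$, so $\sigma_A(\Rat(M))=0$ for every $M$.
\item For $v_i=e_i$, your normalisation forces $\gamma_0$ to be upper triangular. With upper-triangular shears, every $w_i'$ with $i<d$ stays in $\mathrm{span}(e_1,\ldots,e_{d-1})$, where $T$ acts trivially, while $w_d'$ keeps last coordinate $\pm1$.
\item Every admissible $u$ therefore has $Y(cu)=\mu(A)-\mu(A)^2$. You end with $\mu(A\cap(A\pm\theta))$, which is $0$ for $A=[0,1/10)$ and $\theta$ close to $1/2$.
\end{itemize}
The theorem does hold in this example, but only because one may choose $\gamma v_1$ with nonzero last coordinate.

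The missing idea is to make the \emph{first} vector the good direction and shear everything else along it. The paper does this as follows.
\begin{itemize}
\item It finds $v\in\gcd(v_1)\mathcal P$ with $\mu(A\cap T_vA)>\mu(A)^2/2$ and $\mu(\bigcup_nT_{nlv}A)>1-\delta^2/(2(d-1))$ for all $l\leq D$, and sets $\gamma_0v_1=v$.
\item It shears each $\gamma_0v_j$, $j\geq 2$, along $Lv$ with $L=\labs{\det(v_1,\ldots,v_d)}$, using the integral shears $U(I+LE_{1j})U^{-1}$. These all fix $v$.
\item A single union bound of the $d-1$ sets $\bigcup_nT_{nLv+\gamma_0v_j}A$ against $A\cap T_vA$ then finishes. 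No chain $c_1>c_2>\cdots$ and no $L^2$ estimate are needed.
\end{itemize}
The real work is producing $v$. Expansivity of all $lv$ holds on a set of lower density close to $1$. One also needs large returns on a set of positive relative density among primitive vectors and their multiples. The paper gets this from the exponential-sum formula over primitive vectors (Propositions~\ref{Prop: Formula for exp sums over primitives} and~\ref{Prop: Density of big return times}). It then intersects the two sets using $\labs{Q_N^{\mathcal P}}/\labs{Q_N}\to 1/\zeta(d)$.

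Your shear averaging at step $0$ cannot replace this. It gives a large return only for some $w_1+ncu$ on a line, which is again a density-zero set. You therefore lose all control over whether the resulting direction of $w_1'$ is good for the later shears.
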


\begin{rema}\label{Remark: Finite vs entire rat spec}
Implicit in their proof of Theorem \ref{Theorem: Non-quantitative multiple twisted recurrence}, the authors of \cite{BjCF} show that under the stronger spectral assumption that
\[\sigma_A\left(\{\alpha \in \T^d \, : \, 1<\ord(\alpha) < \infty\}\right)\]
is sufficiently small in terms of $\mu(A)$, then for \textit{any} linearly independent $v_1,\ldots,v_d\in \Z^d$ there exists $\gamma \in \SL_d(\Z)$ for which \eqref{eq: Recurrence conclusion with k=1} holds. Here we see the fundamental difference between Theorem \ref{Theorem: Non-quantitative multiple twisted recurrence} and Theorem \ref{Theorem: Quantitative multiple twisted recurrence}; the conclusion of Theorem \ref{Theorem: Non-quantitative multiple twisted recurrence} depends on the entire \textit{infinite} rational spectrum, and indeed Example \ref{Example: Counter example} shows that this really is unavoidable, whereas the conclusion of Theorem \ref{Theorem: Quantitative multiple twisted recurrence} depends only on the \textit{finite} piece of the rational spectrum corresponding to $\Rat(M)$.
\end{rema}
\begin{proof}[Proof of Theorem \ref{Theorem: Quantitative multiple twisted recurrence} via Theorem \ref{Theorem: Small ratM gives conclusion} and Lemma \ref{Lemma: Increment}.]
Repeat the measure increment argument used in the proof of Corollary \ref{Cor: Expansive vectors} with Theorem \ref{Theorem: Small ratM gives conclusion} replacing the role of Theorem \ref{Th: Size of expansive vectors}. 
\end{proof}

\section{A further reduction of Theorem \ref{Theorem: Small ratM gives conclusion}}
In this section we show how the conclusion of Theorem \ref{Theorem: Small ratM gives conclusion} follows from the existence of $\varepsilon$-expansive vectors $v$ which also satisfy that $\mu(A\cap T_v A)$ is large. Modulo some additional bookkeeping of quantitative constants, the argument is essentially identical to the one presented in the proof of Theorem \ref{Theorem: Non-quantitative multiple twisted recurrence} in \cite{BjCF}.
\begin{pro}\label{Prop: Existence of good vectors}
For any $D\in\Z_{>0}$ and $\delta>0$ there exist some $\kappa=\kappa(D,\delta)>0$ and $M=M(D,\delta)>0$ such that for any probability preserving action $T:\Z^d\acts (X,\mu)$ and any $A\subset X$ with $\mu(A)\geq \delta$ and $\sigma_A(\Rat(M))<\kappa$ the following holds. For all $m=1,\ldots, D$ there exists some $v\in m \mathcal{P}$ with
\begin{equation}\label{eq: big return time}
\mu(A\cap T_{v}A) > \frac{\mu(A)^2}{2}
\end{equation}
and
\begin{equation}\label{eq: first D multiples are expansive}
\mu\left(\bigcup_{n\in \Z} T_{nlv}A\right) > 1-\frac{\delta^2}{2(d-1)} \quad \text{for all } l=1,\ldots,D.
\end{equation}
\end{pro}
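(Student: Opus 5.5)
The plan is to fix $m\in\{1,\ldots,D\}$ and look for $v$ of the form $v=mu$ with $u\in Q_N^\mathcal{P}$ and $N$ large. I will show two things. First, the primitive vectors $u$ satisfying \eqref{eq: big return time} for $v=mu$ form a set of relative lower density at least some $c=c(\delta)>0$ in $Q_N^\mathcal{P}$. Second, for each $l\leq D$, the primitive $u$ for which $lmu$ fails to be $\varepsilon_0$-expansive, with $\varepsilon_0:=\delta^2/(2(d-1))$, have relative upper density less than $c/(2D)$. A union bound then produces, for $N$ large, some $u\in Q_N^\mathcal{P}$ satisfying everything.

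\textbf{Step 1 (return times).} By \eqref{eq: Def of spec measure}, Proposition \ref{Prop: Formula for exp sums over primitives} and dominated convergence,
\[
\lim_{N\to\infty}\frac{1}{\labs{Q_N^\mathcal{P}}}\sum_{u\in Q_N^\mathcal{P}}\mu(A\cap T_{mu}A)=\int_{\T^d} g(m\alpha)\,d\sigma_A(\alpha).
\]
I split $\T^d$ into four pieces.
\begin{enumerate}[(i)]
\item The point $\alpha=0$ contributes $\sigma_A(\{0\})\geq\mu(A)^2$.
\item Points $\alpha\neq 0$ with $m\alpha=0$ have order at most $D$. They lie in $\Rat(M)$ once $M\geq D$, so they contribute at most $\kappa$ in absolute value.
\item Points with $2\leq\ord(m\alpha)\leq M'$ have $\ord(\alpha)\leq DM'$. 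They lie in $\Rat(M)$ if $M\geq DM'$, so they also contribute at most $\kappa$.
\item Points with $\ord(m\alpha)>M'$ (including infinite order) satisfy $\labs{g(m\alpha)}\leq 1/J_d(M'+1)\leq 1/(M'+1)$, using $J_d(q)\geq q$ for $d\geq 2$. Since $\sigma_A(\T^d)=\mu(A)\leq 1$, they contribute at most $1/M'$.
\end{enumerate}
Choosing $M'$ and $\kappa$ so that $2\kappa+1/M'\leq\delta^2/4$ makes the limit at least $\tfrac34\mu(A)^2$. The summands lie in $[0,1]$. Hence the set of $u$ with $\mu(A\cap T_{mu}A)>\mu(A)^2/2$ has lower density in $Q_N^\mathcal{P}$ at least $c:=\delta^2/4$; otherwise the averages would eventually fall below $\tfrac34\mu(A)^2$.

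\textbf{Step 2 (expansivity transfer).} Apply Theorem \ref{Th: Size of expansive vectors} with $\varepsilon=\varepsilon_0$ and an $\eta$ to be chosen. This requires $M\geq M(\delta,\varepsilon_0,\eta)$ and $\kappa\leq\kappa(\delta,\varepsilon_0,\eta)$. It says the bad set $B$ of non-$\varepsilon_0$-expansive vectors has upper density less than $\eta$ in $Q_N$. The map $u\mapsto lmu$ is injective and sends $Q_N$ into $Q_{lmN}\cap B$ on the bad $u$. Therefore
\[
\#\{u\in Q_N^\mathcal{P}: lmu\in B\}\leq\labs{B\cap Q_{lmN}}\leq \eta\,(lm)^d\,\labs{Q_N}(1+o_N(1)).
\]
By \eqref{eq: Primitive density exists}, this is at most $\eta\,\zeta(d)D^{2d}\labs{Q_N^\mathcal{P}}(1+o_N(1))$. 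Taking $\eta<c/(2D\,\zeta(d)D^{2d})$ and summing over $l=1,\ldots,D$, the bad primitive $u$ have relative upper density less than $c/2$.

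\textbf{Conclusion and main obstacle.} Comparing with Step 1, for large $N$ there is a $u\in Q_N^\mathcal{P}$ with $v=mu\in m\mathcal{P}$ satisfying \eqref{eq: big return time} and \eqref{eq: first D multiples are expansive}. The constants are $M=\max(DM',M(\delta,\varepsilon_0,\eta))$ and $\kappa=\min(\delta^2/16,\kappa(\delta,\varepsilon_0,\eta))$; these do not depend on $m$, so the argument works for all $m\leq D$ at once. I expect the main care to lie in Step 1. One has to check that $g(m\alpha)$ is not too negative on the rational part: the Möbius signs can be negative, so the non-small rational orders must be absorbed into $\Rat(M)$ with $M\geq DM'$, and the bound $1/J_d(q)$ controls the tail. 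The density transfer in Step 2 is routine bookkeeping between densities in $Q_N$ and in $Q_N^\mathcal{P}$.
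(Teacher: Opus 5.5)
Your proof is correct. Step 1 is essentially the paper's Proposition \ref{Prop: Density of big return times}: the same spectral identity, the same absorption of orders $\ord(\alpha)\leq D\,\ord(m\alpha)\leq DM'$ into $\Rat(M)$, and the same $1/J_d$ tail. Your use of $J_d(q)\geq q$ together with $\sigma_A(\T^d)=\mu(A)\leq 1$ usefully makes the uniformity of that tail in $A$ explicit. One small correction: write $\labs{g(m\alpha)}\leq 1/J_d(\ord(m\alpha))\leq 1/\ord(m\alpha)$, since $J_d$ is not monotone and so $1/J_d(M'+1)$ is not the right intermediate bound.

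Step 2 is where you genuinely differ. The paper applies Theorem \ref{Th: Size of expansive vectors} to each sub-action $S_v=T_{lv}$. Its spectral measure is the pushforward $\sigma_A^l$, controlled by $\sigma_A^l(\Rat(M_1))\leq\sigma_A(\Rat(DM_1))$. This yields a set $E$ of $Q_N$-lower density $>1-\eta$ on which the multiples are expansive (Proposition \ref{Prop: Density of expansive vectors}). The paper then passes to relative density in $Q_N^{\mathcal P}$ by inclusion--exclusion against $d_{Q_N}(\mathcal P)=1/\zeta(d)$, which is why it needs $\eta=\delta^2/(3\zeta(d))$. It concludes from $\underline d_{Q_N^{\mathcal P}}(E)+\underline d_{Q_N^{\mathcal P}}(R(m))>1$.

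You instead apply the theorem once, to $T$ itself, and pull the bad set back along the dilation $u\mapsto lmu$. This costs a factor $(lm)^d\zeta(d)\leq\zeta(d)D^{2d}$, which you absorb into $\eta$, and you finish with a union bound over $l$. Your route avoids sub-actions and pushforward spectral measures entirely, with worse but irrelevant constants. The paper's route gives the cleaner intermediate density statement of Proposition \ref{Prop: Density of expansive vectors}.

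One point in your favour: the statement requires expansivity of $lv=lmu$, with $lm$ as large as $D^2$, and your bookkeeping controls exactly these products. In the paper's write-up, the chosen $u\in E\cap R(m)\cap\mathcal P$ is only known to have $lu$ expansive for $l\leq D$. Strictly, one must therefore invoke Proposition \ref{Prop: Density of expansive vectors} with $D^2$ in place of $D$. That fix is harmless, but your version gets it right automatically.
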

\begin{proof}[Proof of Theorem \ref{Theorem: Small ratM gives conclusion} via Proposition \ref{Prop: Existence of good vectors}]
Fix some positive integer $D$ and $\delta>0$. Let $\kappa=\kappa(D,\delta)$ and $M=M(D,\delta)$ be as in the statement of Proposition \ref{Prop: Existence of good vectors}. We will show that this $\kappa$ and $M$ satisfy the conclusion of Theorem \ref{Theorem: Small ratM gives conclusion}. So let $T:\Z^d \acts (X,\mu)$ be a probability preserving system and suppose $A\subset X$ has $\mu(A)\geq \delta$ and $\sigma_A(\Rat(M))<\kappa$. Let $v_1,\ldots,v_d \in \Z^d$ have $0 < \labs{\det(v_1,\ldots,v_d)}\leq D$. Denote $L:=\labs{\det(v_1,\ldots,v_d)}$. Since $\gcd(v_1) \leq L \leq D$, then by Proposition \ref{Prop: Existence of good vectors} there exists $v\in \gcd(v_1) \mathcal{P}$ satisfying equations \eqref{eq: big return time} and \eqref{eq: first D multiples are expansive}. It is easy to see that $\SL_d(\Z)$ acts transitively on $\mathcal{P}$ so there exists some $\gamma_0 \in \SL_d(\Z)$ with $\gamma_0 v_1 = v$. Since $\mu(A)\geq \delta$ then equation \eqref{eq: big return time} then implies that
\[ \mu(A\cap T_{\gamma_0 v_1}A)> \frac{\delta^2}{2},\] 
and since $T$ preserves $\mu$, the $l=L$ case of equation \eqref{eq: first D multiples are expansive} implies that
\[ B_j:= \bigcup_{n\in \Z} T_{nL \gamma_0 v_1 + \gamma_0 v_j}A\qquad \text{has} \qquad \mu(B_j) > 1-\frac{\delta^2}{2(d-1)}\]
for all $j=2,\ldots, d$.
Then
\begin{align*}
\mu\left( (A\cap T_{\gamma_0 v_1}A) \cap \bigcap_{j=2}^d B_j \right)&=1 - \mu\left((A\cap T_{\gamma_0 v_1}A)^c \cup \bigcup_{j=2}^d B_j^c \right)\\
&>\mu(A\cap T_{\gamma_0 v_1}A) - \sum_{j=2}^d \mu(B_j^c)\\
&> \frac{\delta^2}{2} - (d-1) \frac{\delta^2}{2(d-1)} = 0.
\end{align*}
It then follows from the definition of each $B_j$ that there exist $n_2,\ldots,n_d \in \Z$ such that
\[ \mu\left(A\cap T_{\gamma_0 v_1}A\cap T_{n_2L \gamma_0 v_1 + \gamma_0v_2}A\cap \ldots \cap T_{n_d L \gamma_0 v_1 + \gamma_0 v_d} A\right)>0.\]
All that remains is to check that there exists some $\gamma \in \SL_d(\Z)$ with
\[\gamma v_1 = \gamma_0 v_1 \qquad\text{and} \qquad \gamma v_j = \gamma_0 v_j + n_j L \gamma_0 v_1 \quad \text{for all } j =2 ,\ldots, d.\] 
Set $u_i:= \gamma_0 v_i$ for all $i=1,\ldots,d$ and for each $j=2,\ldots, d$ define a $\Q$-linear map on $\Q^d$ by setting
\[ S_j u_i = u_i + \delta_{ij}L u_1 \qquad \text{for all } i=1,\ldots,d \] 
where $\delta_{ij}$ is the Kronecker delta, and extending by linearity. We claim that that each $S_j \in \SL_d(\Z)$ and so
\[ \gamma: = S_2^{n_2}S_3^{n_3}\ldots S_d^{n_d} \gamma_0 \in \SL_d(\Z)\]
is as required.
Each $S_j$ is a shear and so clearly has $\det(S_j)=1$, so it suffices to prove that $S_j$ has integer entries. Let $U\in M_{d\times d}(\Z)$ have columns $u_1,\ldots,u_d$ and notice that $\det(U) = L$. Each $S_j$ is then of the form 
\[S_j = U (I + L E_{1j}) U^{-1}\] where $E_{1j}$ is the elementary matrix with a $1$ in position $(1,j)$ and zeros elsewhere. Now $U^{-1} = \frac{1}{L} \mathrm{adj}(U)$ where $\mathrm{adj}(U)\in M_{d\times d}(\Z)$ so
\[ S_j = U(I+L E_{1j})U^{-1} = I + UL E_{1j}U^{-1}= I +U E_{1j} \mathrm{adj}(U)\]
which implies that $S_j$ has integer entries as required.
\end{proof}

\section{A proof of Proposition \ref{Prop: Existence of good vectors}}
We prove Proposition \ref{Prop: Existence of good vectors} by first estimating the size of the set of vectors satisfying equations \eqref{eq: big return time} and \eqref{eq: first D multiples are expansive} separately. Non-emptiness of the intersection of these two sets will then follow from size considerations alone.
\begin{pro}\label{Prop: Density of big return times}
For any $D\in \Z_{>0}$ and $\delta>0$ there exist some $\kappa=\kappa(D,\delta)>0$ and $M=M(D,\delta)>0$ such that for any probability preserving action $T:\Z^d\acts (X,\mu)$ and any $A\subset X$ with $\mu(A)\geq \delta$ and $\sigma_A(\Rat(M))<\kappa$ the following is true. For all $m=1,\ldots,D$ the set
\[
R(m):= \left\{v \in \Z^d \, : \, \mu(A\cap T_{mv} A) > \frac{\mu(A)^2}{2}\right\}
\]
satisfies that
\[\underline{d}_{Q_N^\mathcal{P}}(R(m))>\frac{\mu(A)^2}{3}.\] 
\end{pro}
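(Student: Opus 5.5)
We use Proposition \ref{Prop: Formula for exp sums over primitives} to compute the average of $\mu(A\cap T_{mv}A)$ over primitive $v$. We then show this average is close to $\sigma_A(\{0\})\geq \mu(A)^2$, and finish with a reverse Markov argument. Fix $m\in\{1,\ldots,D\}$ and set $F(v):=\mu(A\cap T_{mv}A)=\int_{\T^d} e(v\cdot m\alpha)\,d\sigma_A(\alpha)$. Averaging over $Q_N^\mathcal{P}$ and applying dominated convergence gives
\[
\lim_{N\to\infty}\frac{1}{\labs{Q_N^\mathcal{P}}}\sum_{v\in Q_N^\mathcal{P}}F(v)=\int_{\T^d} g(m\alpha)\,d\sigma_A(\alpha),
\]
where $g$ is the function from Proposition \ref{Prop: Formula for exp sums over primitives}. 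The limit exists pointwise by that proposition, and $\labs{g}\leq 1$.

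Next we split $\T^d$ according to the order of $m\alpha$. If $\ord(\alpha)=\infty$, then $\ord(m\alpha)=\infty$ and $g(m\alpha)=0$. If $m\alpha=0$, then $g=1$. Such $\alpha$ are $\alpha=0$ together with points of order dividing $m$, hence of order at most $D$. For the remaining finite-order $\alpha$, $\ord(m\alpha)=q\geq2$ and $\labs{g(m\alpha)}\leq 1/J_d(q)$.

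We now bound the integral below. Points with $1<\ord(\alpha)\leq M$ contribute at least $-\sigma_A(\Rat(M))>-\kappa$ in real part. If $\ord(\alpha)>M$, then $\ord(m\alpha)\geq \ord(\alpha)/m> M/D$, so $\labs{g(m\alpha)}\leq \sup_{q>M/D}1/J_d(q)$. Since $J_d(q)\geq q$ (for instance, $a=(1,0,\ldots,0)$ times units gives $\varphi(q)$, but better $J_d(q)=q^d\prod_{p\mid q}(1-p^{-d})\geq q^d/\zeta(d)\geq q^2/\zeta(2)$), this contribution is at most $\zeta(2)D^2/M^2\cdot\sigma_A(\T^d)\leq \zeta(2)D^2/M^2$, using $\sigma_A(\T^d)=\mu(A)\leq1$. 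We choose $M\geq D$ so that every point with $m\alpha=0$, $\alpha\neq0$, lies in $\Rat(M)$. Its contribution is then nonnegative anyway, and we simply drop it after taking real parts. The limit average therefore has real part at least $\mu(A)^2-\kappa-\zeta(2)D^2/M^2$. Choosing $\kappa$ and $M$ so that this error is at most $\delta^2/100$ gives a limit of at least $0.99\,\mu(A)^2$.

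The final step is a reverse Markov argument. Since $0\leq F\leq \mu(A)$, for large $N$ let $\rho_N$ be the proportion of $v\in Q_N^\mathcal{P}$ lying in $R(m)$. Then
\[
0.98\,\mu(A)^2\leq \rho_N\mu(A)+(1-\rho_N)\tfrac{\mu(A)^2}{2}\leq \rho_N\mu(A)+\tfrac{\mu(A)^2}{2}.
\]
This gives $\rho_N\geq 0.48\,\mu(A)>\mu(A)^2/3$, using $\mu(A)\leq 1$. Taking $\liminf$ yields $\underline{d}_{Q_N^\mathcal{P}}(R(m))>\mu(A)^2/3$ (strictly, since $0.48\mu(A)>\mu(A)^2/3$).

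The main care point is the bookkeeping of orders under multiplication by $m$. One must check that the high-order part of $\sigma_A$ is controlled uniformly via $\ord(m\alpha)\geq\ord(\alpha)/m$ and the growth of $J_d$. One must also check that the low-order part is either in $\Rat(M)$ (small mass) or has $g(m\alpha)=1\geq0$.
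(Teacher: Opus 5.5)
Your proof is correct and follows essentially the paper's route: the limit comes from Proposition~\ref{Prop: Formula for exp sums over primitives} and dominated convergence, the spectral mass is split into the atom at $0$ (contributing at least $\mu(A)^2$), a low-order rational part controlled by $\sigma_A(\Rat(M))$, and a high-order part controlled by the decay of $1/J_d(q)$, and the same reverse-Markov estimate finishes it. The differences are cosmetic: you split by $\ord(\alpha)$ via $\ord(m\alpha)\geq \ord(\alpha)/m$ where the paper splits the pushforward $\sigma_A^m$ by $\ord(m\alpha)$ and uses $\sigma_A^m(\Rat(Q))\leq\sigma_A(\Rat(DQ))$, and you make $M$ explicit through $J_d(q)\geq q^2/\zeta(2)$.
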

\begin{proof}
Let $\delta>0$ and $D\in \Z_{>0}$. Let $M$ and $\kappa$ be positive constants to be later determined. Let $T: \Z^d \acts (X,\mu)$ be probability preserving and suppose $A\subset X$ satisfies that $\mu(A)\geq \delta$ and $\sigma_A(\Rat(M))<\kappa$. Fix some $m\in \{1,\ldots,D\}$ and denote by $\sigma_A^m$ the pushforward of $\sigma_A$ under the map $\alpha \mapsto m \alpha$. Define the average
\[ S_{N}(A):= \frac{1}{\labs{Q_N^\mathcal{P}}} \sum_{v\in Q_N^\mathcal{P}} \mu(A\cap T_{mv}A).\]
Using the definition of $\sigma_A$ we have that
\begin{equation*}
S_{N}(A) = \frac{1}{\labs{Q_N^\mathcal{P}}} \sum_{v\in Q_N^\mathcal{P}} \int_{\T^d} e(mv \cdot \alpha) \, d \sigma_A(\alpha) = \int_{\T^d}  \frac{1}{\labs{Q_N^\mathcal{P}}} \sum_{v\in Q_N^\mathcal{P}} e(v\cdot \alpha) \, d \sigma_A^m(\alpha).
\end{equation*}
By Proposition \ref{Prop: Formula for exp sums over primitives} and the dominated convergence theorem it then follows that
\begin{equation}
\lim_{N\to \infty} S_{N}(A) = \sigma_A^m(\{0\}) + \sum_{q \geq 2} \frac{\boldsymbol{\mu}(q)}{J_d(q)} \sigma_A^m(\{ \alpha \in \T^d \, : \, \ord(\alpha)=q\}).
\end{equation}
Let $\varepsilon>0$ be later determined. Clearly\footnote{In fact it follows immediately from the Euler product formulae in equations \eqref{eq: Euler prod for zeta} and \eqref{eq: euler prod for Jordan totient} in Section \ref{Section: Exp sums} that $J_d(q)\zeta(d) \geq q^d$.} we have that $J_d(q) \to \infty $ as $ q\to \infty$, and since $\sigma_A^m$ is a finite measure then there exists $Q=Q(\varepsilon)>0$ such that
\begin{equation}\label{eq: Tail bound}
\labs{\sum_{q >Q} \frac{\boldsymbol{\mu}(q)}{J_d(q)} \sigma_A^m(\{ \alpha \in \T^d \, : \, \ord(\alpha)=q\})}<\frac{\varepsilon}{2}.
\end{equation}
For any $\alpha \in \T^d$ with finite order, it is not hard to show that $\ord(m\alpha) = \ord(\alpha)/\gcd(\ord(\alpha),m)$, and since $\gcd(\ord(\alpha),m)\leq m \leq D$ it follows that $\sigma_A^m(\Rat(Q))\leq \sigma_A(\Rat(DQ))$. We can then calculate
\begin{align}
\labs{\sum_{q=2}^Q \frac{\boldsymbol{\mu}(q)}{J_d(q)} \sigma_A^m(\{ \alpha \in \T^d \, : \, \ord(\alpha)=q\})}&\leq \sigma^m_A(\Rat(Q)) \nonumber\\
&\leq \sigma_A(\Rat(DQ)) < \frac{\varepsilon}{2}\label{eq: Middle bound}
\end{align}
where the last inequality follows provided we chose $M = DQ$ and $\kappa = \varepsilon/2$. Now we also have that
\begin{equation}\label{eq: Atom bound}
\sigma_A^m(\{0\})\geq \sigma_A(\{0\}) = \mu(A)^2
\end{equation}
so together with the reverse triangle inequality, equations \eqref{eq: Tail bound}, \eqref{eq: Middle bound}, and \eqref{eq: Atom bound} imply that
\begin{equation}\label{eq: return time average lower bound}
\lim_{N\to \infty} S_{N}(A) \geq \mu(A)^2 - \varepsilon.
\end{equation}
Write
\[ \delta_N:= \frac{\labs{R(m)\cap Q_N^\mathcal{P}} }{\labs{Q_N^\mathcal{P}}}\]
and estimate
\[ S_N(A) \leq \delta_N \mu(A) + \frac{\mu(A)^2}{2}(1-\delta_N) = \delta_N\left(\mu(A) - \frac{\mu(A)^2}{2}\right)  + \frac{\mu(A)^2}{2}.  \]
By taking the limit infimum of both sides and applying equation \eqref{eq: return time average lower bound} we then have that
\begin{equation}
\frac{\mu(A)^2}{2} - \varepsilon \leq \underline{d}_{Q_N^\mathcal{P}}(R(m)) \left(\mu(A) - \frac{\mu(A)^2}{2}\right)< \underline{d}_{Q_N^\mathcal{P}}(R(m)).
\end{equation}
Taking $\varepsilon = \delta^2/6$ then ensures that $\underline{d}_{Q_N^\mathcal{P}}(R(m))>\mu(A)^2/3$, and since $m=1,\ldots,D$ was arbitrary we are done.
\end{proof}

\begin{pro}\label{Prop: Density of expansive vectors}
For any $D\in \Z_{>0}$ and any $\delta, \varepsilon,\eta>0$ there exist some $M=M(D,\delta,\varepsilon,\eta)>0$ and $\kappa=\kappa(D,\delta,\varepsilon,\eta)>0$ such that for any probability preserving action $T:\Z^d\acts (X,\mu)$ and any $A\subset X$ with $\mu(A)\geq \delta$ the following is true. If $\sigma_A(\Rat(M))<\kappa$ then
\[
\underline{d}_{Q_N}\left( \left\{v \in \Z^d \, : \, \mu\left( \bigcup_{n\in \Z}T_{nlv}A\right) >1- \varepsilon\quad  \text{ for each }l=1,\ldots,D\right\} \right)>1-\eta.
\]
\end{pro}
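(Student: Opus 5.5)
The plan is to reduce Proposition \ref{Prop: Density of expansive vectors} to Theorem \ref{Th: Size of expansive vectors}, applied once for each $l=1,\ldots,D$, and then intersect the resulting sets. For fixed $l$, let
\[
G_l:=\left\{v\in\Z^d \, : \, \mu\left(\bigcup_{n\in\Z}T_{nlv}A\right)>1-\varepsilon\right\}.
\]
Then $v\in G_l$ if and only if $lv$ is an $\varepsilon$-expansive direction for $A$. Rather than pass to the sub-action of $l\Z^d$, I would rerun the Markov-inequality argument from the proof of Theorem \ref{Th: Size of expansive vectors} with $L_{lv}^\perp$ in place of $L_v^\perp$.

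\textbf{Step 1.} Identify the pointwise limit
\[
\lim_{N\to\infty}\frac{1}{\labs{Q_N}}\sum_{v\in Q_N}\mathbf{1}_{L_{lv}^\perp}(\alpha).
\]
Since $\mathbf{1}_{L_{lv}^\perp}(\alpha)=\mathbf{1}_{L_v^\perp}(l\alpha)$, Lemma \ref{Lemma: formula for f} gives the limit $1/\ord(l\alpha)$. For finite order we have $\ord(l\alpha)=\ord(\alpha)/\gcd(\ord(\alpha),l)\geq \ord(\alpha)/D$.

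\textbf{Step 2.} Set $Y_l(v):=\sigma_A(L_{lv}^\perp)-\sigma_A(\{0\})\geq 0$. By dominated convergence,
\[
\lim_{N\to\infty}\frac{1}{\labs{Q_N}}\sum_{v\in Q_N}Y_l(v)=L_l(A):=\sum_{q\geq 2}\frac{1}{\ord(l\alpha_q)}\,\sigma_A(\{\ord=q\}),
\]
where $1/\ord(l\alpha)$ is constant on the set of $\alpha$ with $\ord(\alpha)=q$ once $l$ is fixed, since it depends only on $\gcd(q,l)$. The $\alpha$ with $l\alpha=0$ and $\alpha\neq 0$ have order dividing $l$, hence at most $D$, and they contribute weight $1$. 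Splitting at $q\leq M$ versus $q>M$ gives
\[
L_l(A)\leq \sigma_A(\Rat(M))+\frac{D}{M}\,\sigma_A(\T^d)\leq \kappa+\frac{D}{M},
\]
using $\sigma_A(\T^d)=\mu(A)\leq 1$, provided $M\geq D$.

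\textbf{Step 3.} Markov's inequality, together with Lemma \ref{Lemma: Michael Lemma} applied to the vector $lv$ and $t=\sigma_A(\{0\})\varepsilon$, yields
\[
\underline d_{Q_N}(G_l)\geq 1-\frac{L_l(A)}{\delta^2\varepsilon}.
\]
The set in the proposition is $\bigcap_{l=1}^{D} G_l$. Since the lower density of a finite intersection is at least $1-\sum_l\bigl(1-\underline d_{Q_N}(G_l)\bigr)$, it suffices to have
\[
L_l(A)<\frac{\delta^2\varepsilon\eta}{D}\qquad\text{for every } l.
\]
This holds with
\[
\kappa=\frac{\delta^2\varepsilon\eta}{2D}\qquad\text{and}\qquad M=\frac{2D^2}{\delta^2\varepsilon\eta}.
\]

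The only mildly delicate point is Step 1: the order bound, and the handling of the points with $l\alpha=0$. These are nonzero but are killed by multiplication by $l$, so they land in the atom at $0$ for the scaled problem. They must be charged to $\Rat(M)$ rather than to $\{0\}$, which works because their order is at most $D\leq M$. Everything else is a verbatim repetition of the proof of Theorem \ref{Th: Size of expansive vectors}.
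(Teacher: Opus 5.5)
Your argument is correct and yields exactly the paper's constants, $\kappa=\delta^2\varepsilon\eta/(2D)$ and $M=2D^2/(\delta^2\varepsilon\eta)$, but by a different route. The paper does not reopen the proof of Theorem \ref{Th: Size of expansive vectors}. For each $l$ it applies that theorem, with parameters $(\delta,\varepsilon,\eta/D)$, as a black box to the sub-action $S_v:=T_{lv}$, whose spectral measure is the pushforward $\sigma_A^l$ of $\sigma_A$ under $\alpha\mapsto l\alpha$. It checks $\sigma_A^l(\Rat(M_1))\leq\sigma_A(\Rat(DM_1))$ via $\ord(l\alpha)=\ord(\alpha)/\gcd(\ord(\alpha),l)$ and then intersects the $D$ sets as you do.

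The paper's version is shorter and more modular. Yours is more robust at exactly the point you flagged. In the paper's route, the points with $l\alpha=0\neq\alpha$ sit inside the atom $\sigma_A^l(\{0\})=\sigma_A(\{\alpha : l\alpha=0\})$ of the sub-action, and Lemma \ref{Lemma: Michael Lemma} is then used with that atom in the numerator. The bound behind that lemma, however, is $\mu(\bigcup_n T_{nv}A)\geq\mu(A)^2/\sigma_A(L_v^\perp)$. To see this, let $B=\bigcup_nT_{nv}A$ and let $P_v$ be the projection onto $T_v$-invariant functions. Then $\mu(A)=\langle P_v\mathbf{1}_A,\mathbf{1}_B\rangle\leq\lnorm{P_v\mathbf{1}_A}_2\,\mu(B)^{1/2}$ and $\lnorm{P_v\mathbf{1}_A}_2^2=\sigma_A(L_v^\perp)$. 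This gives the quoted form when $\sigma_A(\{0\})=\mu(A)^2$, e.g.\ for ergodic actions. But $S$ is typically not ergodic even when $T$ is: take $\Z^d$ acting on $\Z/2\Z$ by $x\mapsto x+v_1$, with $l=2$.

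You apply the lemma to $T$ itself at the vector $lv$ and charge the points of order dividing $l$ to $\Rat(M)$, so your proof is complete as written for ergodic $T$. The paper's route must additionally control the excess $\sigma_A^l(\{0\})-\mu(A)^2\leq\sigma_A(\Rat(D))<\kappa$ (for ergodic $T$). Doing that bookkeeping carefully reproduces your computation.

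One caveat applies to both arguments. For non-ergodic $T$, the lemma as quoted and the proposition itself are false: take the trivial action on a two-point space with $A$ a single point. So ergodicity should be assumed, and that is the only case used downstream.
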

\begin{proof}
Fix $D\in \Z_{>0}$ and $\delta, \varepsilon,\eta>0$. Let $M_1 = M_1(\delta,\varepsilon,\eta/D)$ and $\kappa_1 = \kappa_1(\delta,\varepsilon,\eta/D)$ be as in Theorem \ref{Th: Size of expansive vectors}. We claim that
\[ M: = DM_1 \qquad \text{and }\qquad \kappa:= \kappa_1\]
are the desired constants.

So let $T:\Z^d\acts (X,\mu)$ be probability preserving and suppose $A\subset X$ satisfies that $\mu(A)\geq \delta$ and $\sigma_A(\Rat(M))<\kappa$. Fix $l\in \{1,\ldots,D\}$ and consider the sub-action $S$ defined by $S_v:= T_{lv}$ for each $v\in \Z^d$. Then the spectral measure of $A$ with respect to $S:\Z^d \acts (X,\mu)$ is precisely $\sigma^l_A$, the pushforward of $\sigma_A$ (where $\sigma_A$ is the spectral measure of $A$ with respect to $T:\Z^d \acts (X ,\mu)$) under the map $\alpha \mapsto l\alpha$.
Just as in the proof of Proposition \ref{Prop: Density of big return times} we have that
\[ \sigma^l_A(\Rat(M_1)) \leq \sigma_A(\Rat(DM_1)) = \sigma_A(\Rat(M)<\kappa\]
and so by Theorem \ref{Th: Size of expansive vectors} applied to the action $S:\Z^d \acts (X,\mu)$ we have that
\[\underline{d}_{Q_N}\left(\underbrace{ \left\{v \in \Z^d \, : \, \mu\left( \bigcup_{n\in \Z}T_{nlv}A\right) >1- \varepsilon\right\}}_{:=E(l,\varepsilon)} \right)>1-\frac{\eta}{D}.\]
As $l\in \{1,\ldots,D\}$ was arbitrary then 
\[ \underline{d}_{Q_N}\left(\bigcap_{l=1}^D E(l,\varepsilon)\right) > 1-\eta\]
as required.
\end{proof}

\begin{proof}[Proof of Proposition \ref{Prop: Existence of good vectors} using Propositions \ref{Prop: Density of big return times} and \ref{Prop: Density of expansive vectors}] 
Let $\delta>0$ and $D\in \Z_{>0}$. Let $M_1 = M_1(D,\delta)$ and $\kappa_1=\kappa_1(D,\delta)$ be as in Proposition \ref{Prop: Density of big return times}, and let $M_2=M_2(D,\delta,\varepsilon,\eta)$ and $\kappa_2=\kappa_2(D,\delta,\varepsilon,\eta)$ be as in Proposition \ref{Prop: Density of expansive vectors} with the choices $\varepsilon=\delta^2/(2(d-1))$ and $\eta = \delta^2/(3\zeta(d))$.

We claim that
\[M := \max(M_1,M_2)\qquad \text{and} \qquad \kappa := \min(\kappa_1,\kappa_2)\]
will be the constants required in the statement of Proposition \ref{Prop: Existence of good vectors}.

So let $T: \Z^d \acts (X,\mu)$ be probability preserving and suppose $A\subset X$ has $\mu(A)\geq \delta$ and satisfies that $\sigma_A(\Rat(M))<\kappa$. Then by Proposition \ref{Prop: Density of big return times}, we have that
\begin{equation}\label{eq: Size of R(M)}
\underline{d}_{Q_N^\mathcal{P}}\left(\underbrace{\left\{ v \in \Z^d \, : \, \mu(A\cap T_{mv}A)>\frac{\mu(A)^2}{2}\right\}}_{=R(m)}\right) > \frac{\delta^2}{3}
\end{equation}
for each $m=1,\ldots,D$. By Proposition \ref{Prop: Density of expansive vectors} we have that
\[ E:= \left\{v \in \Z^d \, : \, \mu\left( \bigcup_{n\in \Z}T_{nlv}A\right) >1- \frac{\delta^2}{2(d-1)}\quad   \text{ for each }l=1,\ldots,D\right\}\]
satisfies that
\begin{equation}\label{eq: Size of E}
\underline{d}_{Q_N}(E) >1- \frac{\delta^2}{3\zeta(d)}. 
\end{equation}

By inclusion-exclusion we have that
\[ \labs{E\cap  \mathcal{P}\cap Q_N} \geq \labs{E\cap Q_N} +\labs{\mathcal{P}\cap Q_N} -\labs{Q_N}. \]
Dividing by $\labs{Q_N}$ and taking the limit infimum of both sides then shows that
\begin{equation}\label{eq: Using inclusion exclusion for density}
 \underline{d}_{Q_N}(E\cap \mathcal{P}) \geq \underline{d}_{Q_N}(E) + \underline{d}_{Q_N}(\mathcal{P}) -1> \frac{1}{\zeta(d)}\left(1-\frac{\delta^2}{3}\right)
\end{equation}
where in the last inequality we use equations \eqref{eq: Primitive density exists} and \eqref{eq: Size of E}.
On the other hand, equation \eqref{eq: Primitive density exists} also implies that
\[ \lim_{N\to \infty} \frac{\labs{\mathcal{P}\cap Q_N}}{\labs{Q_N}} = \frac{1}{\zeta(d)},\]
which together with equation \eqref{eq: Using inclusion exclusion for density} implies that
\begin{equation}\label{eq: Relative size of E}
\underline{d}_{Q_N^\mathcal{P}}(E) > 1-\frac{\delta^2}{3}.
\end{equation}
Together then equations \eqref{eq: Size of R(M)} and \eqref{eq: Relative size of E} ensure that
\[\underline{d}_{Q_N^\mathcal{P}}(E) + \underline{d}_{Q_N^\mathcal{P}}(R(m))>1 \qquad \text{ for all }m=1,\ldots,D, \]
and so $E\cap R(m)\cap \mathcal{P}\neq \emptyset$ for each $m=1\ldots,D$ as required.
\end{proof}

\section{Example \ref{Example: Counter example}}
We show that for every positive integer $k_0$ there exists an ergodic system $T:\Z^d \acts (X,\mu)$ and a set $A\subset X$ with $\mu(A)=2^{-d}$ such that for each $k=1,\ldots,k_0$, there exists some $v_k \in \Z^d$ for which
\[ \mu(A\cap T_{\gamma kv_k}A)=0 \quad \text{for every }\gamma \in \SL_d(\Z).\]

So fix $k_0$ and set $L:= k_0!$. Let $X=(\Z/(2L\Z))^d$ with counting probability measure $\mu$, and let $T$ be the $\Z^d$ action on $X$ by translations. Clearly $T$ acts ergodically. Consider the set $A:=\{0,1,\ldots,L-1\}^d\subset X$. Clearly $A$ has $\mu(A)=2^{-d}$. For any $v\in \mathcal{P}$ we have that $v \not \equiv (0,\ldots,0) \pmod{2}$ and so at least one component of $Lv$ is congruent to $L\pmod{2L}$. It follows that
\[A\cap T_{Lv}A = \emptyset \qquad \text{for all }v\in \mathcal{P}.\]
Since the $SL_{d}(\Z)$ action on $\Z^d$ preserves the $\gcd$ of any vector, for each $k=1,\ldots,k_0$, and any $w\in \mathcal{P}$ the vector $v = (L/k)w$ then has
\[ \mu(A\cap T_{\gamma kv}A) = 0\quad \text{for every }\gamma \in \SL_d(\Z).\]
\section{Exponential sums for primitive vectors}\label{Section: Exp sums}
In this section we prove Proposition \ref{Prop: Formula for exp sums over primitives}, restated here for convenience.
\begin{pro}
For any $\alpha \in \T^d$ we have that
\[g(\alpha): = \lim_{N\to \infty}\frac{1}{\labs{Q_N^\mathcal{P}}} \sum_{v\in Q_N^{\mathcal{P}}} e(v\cdot \alpha)=
\begin{cases}
1 & \text{ if } \alpha=0\\
\frac{1}{J_d(q)}\boldsymbol{\mu}(q) & \text{ if } \ord(\alpha)=q\geq 2\\
0 & \text{ if }\ord(\alpha)=\infty.
\end{cases}\]
\end{pro}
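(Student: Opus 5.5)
The plan is to use Möbius inversion over the gcd and reduce everything to exponential sums over the full cube $Q_N$. Every nonzero $v\in\Z^d$ can be written uniquely as $v=n w$ with $n\geq 1$ and $w\in\mathcal{P}$, namely $n=\gcd(v)$. Hence $\mathbf{1}_{\mathcal{P}}(v)=\sum_{n\mid \gcd(v)}\boldsymbol{\mu}(n)$ for $v\neq 0$. This gives
\[
\sum_{v\in Q_N^{\mathcal{P}}} e(v\cdot\alpha)=\sum_{n=1}^{N}\boldsymbol{\mu}(n)\sum_{u\in Q_{\floor{N/n}}\setminus\{0\}} e(u\cdot n\alpha).
\]
Taking $\alpha=0$, we get $\labs{Q_N^{\mathcal{P}}}=\sum_{n\le N}\boldsymbol{\mu}(n)\bigl((2\floor{N/n}+1)^d-1\bigr)$. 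Bounding the error $(2\floor{N/n}+1)^d-(2N/n)^d=O((N/n)^{d-1})$ uniformly, and using $d\ge2$ so that $\sum_n n^{-(d-1)}\cdot N^{d-1}=O(N^{d-1}\log N)=o(N^d)$, yields $\labs{Q_N^{\mathcal{P}}}=(2N)^d/\zeta(d)+o(N^d)$. This also proves \eqref{eq: Primitive density exists}.

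For general $\alpha$, set $F_M(\beta):=\labs{Q_M}^{-1}\sum_{u\in Q_M}e(u\cdot\beta)$. This is a product of one-dimensional Fejér/Dirichlet-type averages, so $\labs{F_M(\beta)}\le 1$, and $F_M(\beta)\to \mathbf{1}_{\beta=0}$ as $M\to\infty$. Dividing the identity above by $\labs{Q_N^{\mathcal{P}}}\sim(2N)^d/\zeta(d)$, the term indexed by $n$ becomes
\[
\zeta(d)\,\boldsymbol{\mu}(n)\,\frac{\labs{Q_{\floor{N/n}}}}{(2N)^d}\,F_{\floor{N/n}}(n\alpha)+O(N^{-d}).
\]
The weight $\labs{Q_{\floor{N/n}}}/(2N)^d$ is at most $C n^{-d}$ uniformly in $N$ and tends to $n^{-d}$ for each fixed $n$, and the $O(N^{-d})$ errors summed over $n\le N$ are $o(1)$. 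By dominated convergence for series, since $\sum n^{-d}<\infty$ when $d\ge2$, we obtain
\[
g(\alpha)=\zeta(d)\sum_{n\geq1}\frac{\boldsymbol{\mu}(n)}{n^d}\mathbf{1}_{n\alpha=0}.
\]

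It remains to evaluate this sum in each case.
\begin{itemize}
\item If $\ord(\alpha)=\infty$, no $n$ satisfies $n\alpha=0$, so $g(\alpha)=0$.
\item If $\alpha=0$, the sum is $1/\zeta(d)$, so $g(0)=1$.
\item If $\ord(\alpha)=q\ge 2$, then $n\alpha=0$ exactly when $n=qm$. This gives $g(\alpha)=\zeta(d)q^{-d}\sum_{m}\boldsymbol{\mu}(qm)m^{-d}$. Since $\boldsymbol{\mu}(qm)=\boldsymbol{\mu}(q)\boldsymbol{\mu}(m)\mathbf{1}_{\gcd(m,q)=1}$, the sum factors as $\boldsymbol{\mu}(q)\prod_{p\nmid q}(1-p^{-d})=\boldsymbol{\mu}(q)\,\zeta(d)^{-1}\prod_{p\mid q}(1-p^{-d})^{-1}$. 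Hence $g(\alpha)=\boldsymbol{\mu}(q)\big/\bigl(q^d\prod_{p\mid q}(1-p^{-d})\bigr)$.
\end{itemize}
To conclude, I will use the Euler product $J_d(q)=q^d\prod_{p\mid q}(1-p^{-d})$. This follows from the Chinese remainder theorem, since $J_d$ is multiplicative, together with $J_d(p^k)=p^{kd}-p^{(k-1)d}$, which counts vectors mod $p^k$ not all divisible by $p$.

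I expect the main obstacle to be the interchange of limit and infinite sum. The number of terms grows with $N$, so I need the uniform domination $\labs{Q_{\floor{N/n}}}\le C(N/n)^d$ together with $\labs{F}\le 1$. I also need to check that the boundary terms, namely the removed zero vector and the floor errors, are $o(N^d)$ after summing over $n\le N$. This is exactly where $d\ge2$ is used, since it gives convergence of $\sum n^{-d}$ and $N^{d-1}\log N=o(N^d)$.
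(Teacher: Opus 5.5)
Your argument is correct, and for rational $\alpha$ it takes a genuinely different route from the paper.

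\textbf{The paper's route.} For $\ord(\alpha)=q<\infty$, the paper splits $Q_N^{\mathcal{P}}$ into reduced residue classes modulo $q$. It proves in Lemma~\ref{Lemma: Some NT formulae} that each class contains $\labs{Q_N}/(\zeta(d)J_d(q))+O(\log(N)N^{d-1}/q^{d-1})$ primitive vectors. It then evaluates the $d$-dimensional Ramanujan sum $\sum_{\gcd(r,q)=1}e(r\cdot a/q)=\boldsymbol{\mu}(q)$. The paper inverts the exponential sum itself via $v=nu$ only in the irrational case.

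\textbf{Your route.} You apply that substitution for every $\alpha$ and pass to the limit termwise by dominated convergence. This gives the single formula $g(\alpha)=\zeta(d)\sum_{n\,:\,n\alpha=0}\boldsymbol{\mu}(n)n^{-d}$, from which all three cases follow by an Euler product computation. This bypasses both the congruence-class count and the Ramanujan sum.

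\textbf{What each buys.} The paper's route is quantitative: its explicit error terms give a rate of convergence for rational $\alpha$, and dominated convergence gives none. It also displays $1/J_d(q)$ as the share of primitive vectors in each reduced residue class and $\boldsymbol{\mu}(q)$ as a Ramanujan sum. This is the arithmetic interpretation the introduction alludes to. Your route is shorter and treats all $\alpha$ uniformly.

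\textbf{The irrational case.} Your handling of the limit is more careful than the paper's here. The paper bounds $\labs{\sum_{m=-M}^{M}e(mn\alpha_i)}$ by $2/\labs{1-e(n\alpha_i)}$ and then uses a constant $C_\alpha$ independent of $n$. That bound is not uniform in $n$, since $n\alpha_i$ comes arbitrarily close to $0$ in $\T$. Your domination $\labs{F_M}\leq 1$, together with pointwise convergence for each fixed $n$, is exactly what makes that step rigorous.

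\textbf{A cosmetic point.} Replacing $\labs{Q_N^{\mathcal{P}}}$ by $(2N)^d/\zeta(d)$ produces a common multiplicative factor $1+o(1)$, not an additive $O(N^{-d})$ in each term. This is harmless, because your series is absolutely bounded by $C\sum_n n^{-d}$.
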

We first recall some facts from analytic number theory, all of which can be found in \cite{A}.

The M\"{o}bius function $\boldsymbol{\mu}: \Z_{>0}\to \R$ is defined by
\[
\boldsymbol{\mu}(n) : = 
\begin{cases}
1 & \text{if }n=1\\
(-1)^k &\text{if }n \text{ is a product of }k\text{ distinct primes}\\
 0 &\text{otherwise}
\end{cases}
\]
and satisfies that for any positive integer $m$,
\begin{equation}\label{eq: Mobius inversion}
\sum_{n | m} \boldsymbol{\mu}(n) = 
\begin{cases}
1 & \text{if }m=1\\
 0 &\text{otherwise.}
\end{cases}
\end{equation}
The Riemann zeta function is defined by
\[ \zeta(s) := \sum_{n=1}^\infty \frac{1}{n^s} \qquad \text{for any }s\in \C \text{ with }\mathrm{Re}(s)>1\]
and satisfies the Euler product formula
\begin{equation}\label{eq: Euler prod for zeta}
\zeta(s) = \prod_{p\text{ prime}}\frac{1}{1-p^{-s}}\qquad \text{for any }s\in \C \text{ with }\mathrm{Re}(s)>1,
\end{equation}
and that
\begin{equation}\label{eq: Dirichlet series formula}
\frac{1}{\zeta(s)} = \sum_{n=1}^\infty\frac{\boldsymbol{\mu}(n)}{n^s}\qquad \text{for any }s\in \C \text{ with }\mathrm{Re}(s)>1.
\end{equation}

The Jordan totient function $J_q: \Z_{>0}\to \R$ is defined by 
\[ J_d(q): = \labs{\left\{ a=(a_1,\ldots,a_d)\in [q]^d\, : \, \gcd(a,q) = 1\right\}}\]
where $[q]:=\{1,\ldots,q\}$.
\begin{lem}\label{Lemma: Some NT formulae}
Let $N$ and $M$ be positive integers and let $r \in (\Z/M\Z)^d$ have $\gcd(r,M)=1$. Then we have that
\begin{equation}\label{eq: Density of primitives}
\labs{Q_N^\mathcal{P}} = \frac{\labs{Q_N}}{\zeta(d)} + O\left(\log(N)N^{d-1}\right)
\end{equation}
and
\begin{equation}\label{eq: Counting primtives in congruence class}
\labs{\left\{v\in Q_N^\mathcal{P} \, : \, v \equiv r \:(\mathrm{mod }\,M) \right\}} = \frac{\labs{Q_N}}{\zeta(d)J_d(M)} + O\left( \frac{\log(N)N^{d-1}}{M^{d-1}}\right).
\end{equation}
\end{lem}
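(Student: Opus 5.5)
Both estimates come from Möbius inversion over the gcd. For \eqref{eq: Density of primitives}, I use \eqref{eq: Mobius inversion} to write the indicator of primitivity of a non-zero $v\in\Z^d$ as $\sum_{n\mid \gcd(v)}\boldsymbol{\mu}(n)$. Swapping sums gives
\[
\labs{Q_N^\mathcal{P}}=\sum_{n=1}^{N}\boldsymbol{\mu}(n)\,\labs{\{v\in Q_N\setminus\{0\} : n\mid v\}}=\sum_{n=1}^N\boldsymbol{\mu}(n)\left((2\lfloor N/n\rfloor+1)^d-1\right).
\]
Since $(2\lfloor N/n\rfloor+1)^d=(2N+1)^d n^{-d}+O(N^{d-1}n^{-(d-1)})$ for $n\leq N$, the main term is $\labs{Q_N}\sum_{n\leq N}\boldsymbol{\mu}(n)n^{-d}$. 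Completing this sum using \eqref{eq: Dirichlet series formula} costs $O(N^d\sum_{n>N}n^{-d})=O(N)$. The error term is $O(N^{d-1}\sum_{n\leq N}n^{-(d-1)})$, which is $O(N\log N)$ when $d=2$ and $O(N^{d-1})$ when $d\geq 3$. In every case the total is within $O(\log(N)N^{d-1})$.

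For \eqref{eq: Counting primtives in congruence class} I run the same inversion inside the congruence class, so I need to count $\{v\in Q_N : n\mid v,\ v\equiv r \pmod M\}$. If $\gcd(n,M)>1$, then $n\mid v$ and $v\equiv r$ would force a common prime $p$ to divide $r$ and $M$, contradicting $\gcd(r,M)=1$. So only $n$ coprime to $M$ contribute. For such $n$, writing $v=nw$, the condition becomes $w\equiv n^{-1}r\pmod M$ with $w\in Q_{\lfloor N/n\rfloor}$, a single residue class mod $M$ in a box of side about $2N/n$. Its size is $\labs{Q_N}/(n^dM^d)+O((N/n)^{d-1}/M^{d-1}+1)$. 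The $+1$ is harmless when $n\leq N/M$; I truncate there, and for larger $n$ I bound the count trivially by $O(1)$ per class, which is absorbable. This gives the main term
\[
\frac{\labs{Q_N}}{M^d}\sum_{\gcd(n,M)=1}\frac{\boldsymbol{\mu}(n)}{n^d}=\frac{\labs{Q_N}}{M^d}\cdot\frac{1}{\zeta(d)}\prod_{p\mid M}(1-p^{-d})^{-1}.
\]
By the Euler product \eqref{eq: Euler prod for zeta} and $J_d(M)=M^d\prod_{p\mid M}(1-p^{-d})$ (CRT multiplicativity together with $J_d(p^k)=p^{kd}-p^{(k-1)d}$), this equals $\labs{Q_N}/(\zeta(d)J_d(M))$.

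The main obstacle is the bookkeeping needed to get the error uniformly $O(\log(N)N^{d-1}/M^{d-1})$. The tail $n>N/M$ and the lattice-point boundary error in a residue class of a box must both be controlled with the right power of $M$. I would first handle the regime $M\leq N$, where the box counts give boundary errors $O((N/(nM))^{d-1})$, and sum these over $n$ to get the logarithm. When $M>N$ the claimed error is at least comparable to the trivial bound for a class count in $Q_N$, so that regime is straightforward. If necessary, I would restrict the claim to $M\leq N$, which is all that is needed when taking $N\to\infty$ with $M$ fixed in Proposition~\ref{Prop: Formula for exp sums over primitives}.
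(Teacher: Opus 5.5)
Your argument is essentially the paper's. The steps match: Möbius inversion over $\gcd(v)$, discarding $n$ with $\gcd(n,M)>1$, and turning $n\mid v$, $v\equiv r \pmod M$ into a single class modulo $nM$ (your substitution $v=nw$, $w\equiv n^{-1}r$ is the paper's CRT step). Both then count lattice points in that class and identify $\frac{1}{M^d}\sum_{\gcd(n,M)=1}\boldsymbol{\mu}(n)n^{-d}$ with $1/(\zeta(d)J_d(M))$ via Euler products.

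It is correct for fixed $M$ and $r$ as $N\to\infty$. That is what the paper's $O$-convention asserts (constants independent of $N$ only), and it is all that Proposition \ref{Prop: Formula for exp sums over primitives} needs. Your handling of the $O(1)$ boundary term when $nM>N$ is more careful than the paper, which applies \eqref{eq: (1)} with modulus $nM$ for every $n\leq N$.

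Drop the uniformity discussion, though, because it is wrong.
\begin{itemize}
\item The estimate fails uniformly in $M$. For $r=(1,0,\ldots,0)$ and $M>N^2$ the left side equals $1$, while the main term and the error term $\log(N)N^{d-1}/M^{d-1}$ both tend to $0$. So the regime $M>N$ is not ``straightforward''.
\item Your trivial bound $O(N)$ for the tail $N/M<n\leq N$ fits inside $O(\log(N)N^{d-1}/M^{d-1})$ only because the implied constant may depend on $M$. For $d=2$ it would need $M=O(\log N)$, so it does not give uniformity over $M\leq N$ either.
\end{itemize}
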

\begin{proof}
Let $M$ be a positive integer. For any $r_0 \in \Z/M\Z$ clearly we have that
\begin{equation*}
\labs{ \left\{ n\in [-N,N]\cap \Z \, : \, n \equiv r_0 \:(\text{mod }M) \right\}} = \frac{\labs{2N+1}}{M} + O\left( 1 \right),
\end{equation*}
and so for any $r\in (\Z/M\Z)^d$
\begin{equation}\label{eq: (1)}
\labs{\left\{ v\in Q_N \, : \, v \equiv r \:(\text{mod }M) \right\}} = \frac{\labs{Q_N}}{M^d} + O\left(\frac{N^{d-1}}{M^{d-1}}\right).
\end{equation}
Taking $r\equiv 0$ in \eqref{eq: (1)} yields
\begin{equation}\label{eq: (2)}
\labs{\left\{ v\in Q_N \, : \, M \, |\, v \right\}} = \frac{\labs{Q_N}}{M^d} + O\left(\frac{N^{d-1}}{M^{d-1}}\right).
\end{equation}
Use equation \eqref{eq: Mobius inversion} to write for any $v\in \Z^d$,
\begin{equation} \label{eq: (4)}
\mathbf{1}_{\gcd(v)=1} = \sum_{n | \gcd(v)} \boldsymbol{\mu}(n) = \sum_{n\geq 1} \boldsymbol{\mu}(n) \mathbf{1}_{n | v}.
\end{equation}
We can then use \eqref{eq: (2)} and \eqref{eq: (4)} to calculate
\begin{align*}
\labs{Q_N^\mathcal{P}}&= \sum_{v \in Q_N} \mathbf{1}_{\gcd(v)=1} = \sum_{v \in Q_N}\sum_{n\geq 1} \boldsymbol{\mu}(n) \mathbf{1}_{n | v}\\
&=\sum_{v\in Q_N} \sum_{n=1}^N \boldsymbol{\mu}(n) \mathbf{1}_{n | v}\\
&= \sum_{n=1}^N \boldsymbol{\mu}(n) \labs{\left\{ v\in Q_N \, : \, n \,|\, v \right\}}\\
&=\sum_{n=1}^N \boldsymbol{\mu}(n)\left(\frac{\labs{Q_N}}{n^d} + O\left(\frac{N^{d-1}}{n^{d-1}}\right).\right)\\
&= \labs{Q_N} \sum_{n=1}^N \frac{\boldsymbol{\mu}(n)}{n^d} + \sum_{n=1}^N \boldsymbol{\mu}(n) O\left(\frac{N^{d-1}}{n^{d-1}}\right)\\
&=\labs{Q_N}\left( \sum_{n=1}^\infty\frac{\boldsymbol{\mu}(n)}{n^d} + O\left(\frac{1}{N^{d-1}}\right)\right) + O(\log(N)N^{d-1})
\end{align*}
where in the final line we bound the absolute value of the tail $\sum_{n>N}\frac{\boldsymbol{\mu}(n)}{n^d}$ by an integral and use the crude bound
\[ \labs{\sum_{n=1}^N \frac{\boldsymbol{\mu}(n)}{n^{d-1}}} \leq \sum_{n=1}^N \frac{1}{n} = O(\log(N))\]
for any $d\geq 2$. Applying the formula from \eqref{eq: Dirichlet series formula} we have that
\[ \labs{Q_N^\mathcal{P}}=\labs{Q_N}\left( \frac{1}{\zeta(d)} + O\left(\frac{1}{N^{d-1}}\right)\right) + O(\log(N)N^{d-1})\]
and collecting error terms yields equation \eqref{eq: Density of primitives}.

We now turn to proving equation \eqref{eq: Counting primtives in congruence class}. So let $r\in (\Z/M\Z)^d$ have $\gcd(r,M)=1$ and use equation \eqref{eq: Mobius inversion} again to write
\begin{align*}
\labs{\left\{v\in Q_N^\mathcal{P} \, : \, v \equiv r \:(\text{mod }M) \right\}} &= \sum_{v\in Q_N} \mathbf{1}_{v \equiv r (\text{mod }M)} \mathbf{1}_{\gcd(v)=1}  \\
&=\sum_{v\in Q_N} \mathbf{1}_{v \equiv r (\text{mod }M)} \sum_{n=1}^N \boldsymbol{\mu}(n) \mathbf{1}_{n | v}.
\end{align*}
Notice that since $\gcd(r,M)=1$ then $n \, | \, v$ implies that $\gcd(n,M)=1$. Indeed writing $v = r + Ml$ for some $l\in \Z^d$ then
\[ n \, | \, v \implies \gcd(n,M) \, | \, v \implies \gcd(n,M) \, | \, r\implies \gcd(n,M)=1.\]
Continuing with our earlier calculation then
\begin{align}
\labs{\left\{v\in Q_N^\mathcal{P} \, : \, v \equiv r \:(\text{mod }M) \right\}} &= \sum_{v\in Q_N} \mathbf{1}_{v \equiv r (\text{mod }M)} \sum_{n=1}^N \boldsymbol{\mu}(n) \mathbf{1}_{\gcd(n,M)=1}\mathbf{1}_{n | v} \nonumber \\
&= \sum_{n=1}^N \mathbf{1}_{\gcd(n,M)=1} \boldsymbol{\mu}(n) \sum_{v\in Q_N} \mathbf{1}_{v \equiv 0 (\text{mod }n)} \mathbf{1}_{v\equiv r (\text{mod }M)}.\label{eq: (5)}
\end{align}
Now for each $n$ and $M$ with $\gcd(n,M)=1$ the Chinese remainder theorem implies there exists some $r_n \in (\Z/ (nM)\Z)^d$ so that
\begin{equation}\label{eq: (6)}
\sum_{v\in Q_N} \mathbf{1}_{v \equiv 0 (\text{mod }n)} \mathbf{1}_{v\equiv r (\text{mod }M)}=\Big|\left\{v \in Q_N \, : \, v\equiv r_n \pmod{nM}\right\}\Big|.
\end{equation}
Equation \eqref{eq: (1)} provides us with a bound for the set in the right hand side of \eqref{eq: (6)}, which combined with equation \eqref{eq: (5)} allows us to calculate
\begin{align}
\Big|\{v\in Q_N^\mathcal{P} &\, : \, v \equiv  r \:(\text{mod }M) \}\Big| \nonumber\\
&=\sum_{n=1}^N \mathbf{1}_{\gcd(n,M)=1} \boldsymbol{\mu}(n) \left(\frac{\labs{Q_N}}{n^d M^d} + O\left(\frac{N^{d-1}}{n^{d-1}M^{d-1}}\right) \right)\nonumber\\
&= \frac{\labs{Q_N}}{M^d}\sum_{n=1}^N \mathbf{1}_{\gcd(n,M)=1} \frac{\boldsymbol{\mu}(n)}{n^d}+O\left(\frac{\log(N)N^{d-1}}{M^{d-1}}\right)\nonumber\\
&=\frac{\labs{Q_N}}{M^d}\left(\sum_{n=1}^\infty \mathbf{1}_{\gcd(n,M)=1} \frac{\boldsymbol{\mu}(n)}{n^d} + O\left(\frac{1}{N^{d-1}}\right)\right)+O\left(\frac{\log(N)N^{d-1}}{M^{d-1}}\right)\nonumber\\
&= \frac{\labs{Q_N}}{M^d}\sum_{n=1}^\infty \mathbf{1}_{\gcd(n,M)=1} \frac{\boldsymbol{\mu}(n)}{n^d} + O\left(\frac{\log(N)N^{d-1}}{M^{d-1}}\right). \label{eq: (7)}
\end{align}
We claim that
\begin{equation}
\frac{1}{M^d}\sum_{n=1}^\infty \mathbf{1}_{\gcd(n,M)=1} \frac{\boldsymbol{\mu}(n)}{n^d} = \frac{1}{\zeta(d) J_d(M)},
\end{equation}
which, once combined with equation \eqref{eq: (7)}, will yield equation \eqref{eq: Counting primtives in congruence class} as required. Indeed, by equation \eqref{eq: Mobius inversion} again we have that
\begin{align*}
J_d(M)= \sum_{a \in [M]^d} \mathbf{1}_{\gcd(a,M)=1}
&= \sum_{a\in [M]^d} \sum_{n \,| \,M} \boldsymbol{\mu}(n) \mathbf{1}_{n \, | \, \gcd(a,M)}\\
&=\sum_{n \,| \,M} \boldsymbol{\mu}(n)\labs{\left\{ a\in [M]^d \, : \, n \, | \, a \right\}  }\\
&=\sum_{n \,| \,M} \boldsymbol{\mu}(n) \frac{M^d}{n^d}\\
&=M^d \sum_{n\geq 1} \mathbf{1}_{n\, | \, M} \frac{\boldsymbol{\mu}(n)}{n^d}.
\end{align*}
The function $f(n)= \mathbf{1}_{n\, | \, M} \boldsymbol{\mu}(n)/n^d$ is multiplicative with an absolutely convergent sum $\sum_{n\geq 1}f(n)$ so we can use an Euler product\footnote{Recall that a function $f: \Z_{>0} \to \C$ is multiplicative if $f(nm)= f(n)f(m)$ for all $n$ and $m$ with $\gcd(n,m)=1$. If a multiplicative function $f$ has that the series $\sum_{n \geq 1}f(n)$ is absolutely convergent, then the series is equal to its Euler product \[
\sum_{n \geq 1}f(n) = \prod_{p\text{ prime}}(1+f(p)+f(p^2)+\ldots).\] See \cite[Theorem 11.6]{A} for more details.} to write
\begin{align*}
\sum_{n\geq 1} \mathbf{1}_{n\, | \, M} \frac{\boldsymbol{\mu}(n)}{n^d}&= \prod_{p}\left( 1+ \sum_{n\geq 1}\frac{\mathbf{1}_{p^n \, | \, M} \boldsymbol{\mu}(p^n)}{p^{nd}}\right)\\
&=\prod_{p} \left(1-\frac{\mathbf{1}_{p \, | \, M}}{p^d}\right)\\
&=\prod_{p \, | \, M} \left(1-\frac{1}{p^d}\right),
\end{align*}
and so
\begin{equation}\label{eq: euler prod for Jordan totient}
J_d(M) = M^d\prod_{p \, | \, M} \left(1-\frac{1}{p^d}\right).
\end{equation}
Similarly we have that
\begin{align*}
\frac{1}{M^d}\sum_{n\geq 1} \mathbf{1}_{\gcd(n,M)=1} \frac{\boldsymbol{\mu}(n)}{n^d} &= \frac{1}{M^d} \prod_{p}\left( 1+ \sum_{n\geq 1}\frac{\mathbf{1}_{\gcd(p^n,M)=1} \boldsymbol{\mu}(p^n)}{p^{nd}}\right)\\
&=\frac{1}{M^d} \prod_{p}\left( 1-\frac{\mathbf{1}_{p \,\nmid \,M}}{p^d}\right)\\
&=\frac{1}{M^d} \prod_{p \, \nmid \, M}\left(1-p^{-d}\right).
\end{align*}
Combining equations \eqref{eq: Euler prod for zeta} and \eqref{eq: euler prod for Jordan totient} we then have that
\[
\frac{1}{M^d} \prod_{p \, \nmid \, M}\left(1-p^{-d}\right)
=\frac{1}{M^d} \frac{\prod_{p}\left(1-p^{-d}\right)}{\prod_{p \, | \, M}\left(1-p^{-d}\right)}\\
=\frac{1}{\zeta(d)J_d(M)}\]
as required.
\end{proof}
\begin{proof}[Proof of Proposition \ref{Prop: Formula for exp sums over primitives}]
For $\alpha \in \T^d$ define
\begin{equation}\label{eq: Def of gN}
g_N(\alpha):=\frac{1}{\labs{Q_N^\mathcal{P}}} \sum_{v\in Q_N^{\mathcal{P}}} e(v\cdot \alpha).
\end{equation}
First consider the case that $\ord(\alpha)=q$ for some positive integer $q$. Clearly if $\alpha = 0$ the conclusion is obvious, so take $q\geq 2$. We can then write $\alpha = a/q$ for some $a\in [q]^d$ with $\gcd(a,q)=1$. Then $v \mapsto e((v\cdot \alpha)/q)$ is constant on residue classes modulo $q\Z^d$, so we can use \eqref{eq: Counting primtives in congruence class} from Lemma \ref{Lemma: Some NT formulae} to write
\begin{align}
g_N(\alpha) &= \frac{1}{\labs{Q_N^\mathcal{P}}}\sum_{v\in Q_N^\mathcal{P}} e \left( \frac{v\cdot a}{q}\right) \nonumber \\
&= \frac{1}{\labs{Q_N^\mathcal{P}}} \sum_{r \in [q]^d \: \gcd(r,q)=1}\labs{\left\{v\in Q_N^\mathcal{P} \, : \, v \equiv r \:(\mathrm{mod }\,q) \right\}}e \left( \frac{r\cdot a}{q}\right) \nonumber \\
&= \frac{1}{\labs{Q_N^\mathcal{P}}} \left(\frac{\labs{Q_N}}{\zeta(d)J_d(q)} + O\left( \frac{\log(N)N^{d-1}}{q^{d-1}}\right) \right)\sum_{\substack{r \in [q]^d \\ \gcd(r,q)=1}} e \left( \frac{r\cdot a}{q}\right). \label{eq: Reducing to Ramanujan sum}
\end{align}
By equation \eqref{eq: Density of primitives} we have that
\begin{equation}\label{eq: little o equation}
\frac{1}{\labs{Q_N^\mathcal{P}}} \left(\frac{\labs{Q_N}}{\zeta(d)J_d(q)} + O\left( \frac{\log(N)N^{d-1}}{q^{d-1}}\right) \right)
=\frac{1}{J_d(q)}\frac{1}{1+o_{N}(1)} + o_{N}(1).
\end{equation}
The sum in equation \eqref{eq: Reducing to Ramanujan sum} is a classical $d$-dimensional Ramanujan sum which we can calculate using equation \eqref{eq: Mobius inversion} again as follows.
\begin{align}
\sum_{\substack{r \in [q]^d \\ \gcd(r,q)=1}} e \left( \frac{r\cdot a}{q}\right)&=\sum_{r \in [q]^d } \mathbf{1}_{\gcd(r,q)=1} e \left( \frac{r\cdot a}{q}\right)  \nonumber \\
&=\sum_{r \in [q]^d } \sum_{t \, | \, r,q} \boldsymbol{\mu}(t) e \left( \frac{r\cdot a}{q}\right) \nonumber\\
&=\sum_{r \in [q]^d } \sum_{t \, | \, q} \mathbf{1}_{t \, | \,r} \boldsymbol{\mu}(t) e \left( \frac{r\cdot a}{q}\right) \nonumber\\
&=\sum_{t \, | \, q} \boldsymbol{\mu}(t) \sum_{\substack{r \in [q]^d \\ t \, | \, r} } e \left( \frac{r\cdot a}{q}\right) \nonumber \\
&=\sum_{t \, | \, q}\boldsymbol{\mu}(t)\sum_{r\in [q/t]^d} e \left( \frac{r\cdot a}{q/t}\right) \nonumber \\
&=\sum_{t \, | \, q}\boldsymbol{\mu}(t) \mathbf{1}_{q/t \, | \, a} \left(\frac{q}{t}\right)^d
= \boldsymbol{\mu}(q) \label{eq: formula for ramanujan sum}
\end{align}
In the second last equality we used orthogonality of the characters on $\Z/(q/t)\Z$ and in the final equality we used that $\gcd(a,q)=1$.
Substituting equations \eqref{eq: little o equation} and \eqref{eq: formula for ramanujan sum} into equation \eqref{eq: Reducing to Ramanujan sum} and taking $N\to \infty$ then gives the desired formula.

It remains to show that $g(\alpha)=0$ when $\ord(\alpha)=\infty$. Use equation \eqref{eq: (4)} again to write
\begin{align}
g_N(\alpha) = \frac{1}{\labs{Q_N^\mathcal{P}}} \sum_{v \in Q_N^\mathcal{P}} e(v\cdot \alpha)&=\frac{1}{\labs{Q_N^\mathcal{P}}} \sum_{n=1}^N \boldsymbol{\mu}(n) \sum_{\substack{v \in Q_N \\ n \, | \, v}}   e(v\cdot \alpha) \nonumber\\
&=\frac{1}{\labs{Q_N^\mathcal{P}}} \sum_{n=1}^N \boldsymbol{\mu}(n) \sum_{v \in Q_{\floor{N/n}}}   e(v\cdot n\alpha). \label{eq: dealing with irrational}
\end{align}
Now for any positive integer $M$ we have that
\[ \sum_{v\in Q_M} e(v\cdot n\alpha) = \sum_{v_1,\ldots,v_d \in [-M,M]} \prod_{i=1}^d e(v_i\cdot n\alpha_i) = \prod_{i=1}^d \sum_{m=-M}^M e(mn \alpha_i).\]
Since $\ord(\alpha)=\infty$ then at least one $\alpha_i$ is irrational, and so by summing a finite geometric series in the above formula we have that
\begin{align*}
\labs{\sum_{v\in Q_M} e(v\cdot n\alpha)} &\leq \labs{2M+1}^{d-1} \labs{\sum_{m=-M}^M e(n\alpha_i)^m}\\
&\leq\labs{2M+1}^{d-1} \labs{\frac{2}{1-e(n\alpha_i)}} = O_\alpha(M^{d-1}),
\end{align*}
and applying this to equation \eqref{eq: dealing with irrational} shows that
\begin{align*}
\labs{g_N(\alpha)} &= \frac{1}{\labs{Q_N^\mathcal{P}}} \labs{\sum_{n=1}^N \boldsymbol{\mu}(n) O_{\alpha}\left(\frac{N^{d-1}}{n^{d-1}} \right)}\\
&\leq \frac{N^{d-1}}{\labs{Q_N^\mathcal{P}}} C_{\alpha} \sum_{n=1}^N \frac{1}{n^{d-1}} = \frac{N^{d-1}}{\labs{Q_N^\mathcal{P}}} O_{\alpha}(\log(N)) \to 0 \text{ as } N \to \infty
\end{align*}
since $\labs{Q_N^\mathcal{P}}$ is on the order of $\labs{Q_N} = (2N+1)^d$ by equation \eqref{eq: Density of primitives}.
\end{proof}

\end{document}